\documentclass[11pt,a4paper]{amsart}

\usepackage{amssymb,mathrsfs,enumitem,amsmath,amsthm,bbold}
\usepackage[mathscr]{euscript}
\usepackage{color}

\newif\ifshowcorrections
\showcorrectionstrue

\usepackage[all]{xy}
\definecolor{Chocolat}{rgb}{0.36, 0.2, 0.09}
\definecolor{BleuTresFonce}{rgb}{0.215, 0.215, 0.36}
\definecolor{EgyptianBlue}{rgb}{0.06, 0.2, 0.65}

\usepackage[backend=bibtex,
    sorting=anyt,
    isbn=false,
    url=false,
    doi=false,
    maxbibnames=100
    ]{biblatex}
\usepackage{fourier}

\usepackage[colorlinks,final,hyperindex]{hyperref}
\hypersetup{citecolor=BleuTresFonce, urlcolor=EgyptianBlue, linkcolor=Chocolat}
\theoremstyle{plain}
\newtheorem*{itheorem}{Theorem}
\newtheorem{theorem}{Theorem}[section]
\newtheorem{corollary}[theorem]{Corollary}

\newtheorem{lemma}[theorem]{Lemma}
\newtheorem{proposition}[theorem]{Proposition}

\newtheorem{conjecture}[theorem]{Conjecture}

\theoremstyle{definition}

\newtheorem{remark}[theorem]{Remark}

\DeclareMathAlphabet{\pazocal}{OMS}{zplm}{m}{n}

\def\calD{\pazocal{D}}
\def\calE{\pazocal{E}}
\def\calF{\pazocal{F}}

\def\calI{\pazocal{I}}
\def\calJ{\pazocal{J}}

\def\calS{\pazocal{S}}

\def\sfA{\mathsf{A}}

\def\sfV{\mathsf{V}}
\def\sfW{\mathsf{W}}
\def\LieH{\mathfrak{h}}
\def\sfx{\mathsf{x}}
\def\sfy{\mathsf{y}}
\def\sfz{\mathsf{z}}

\def\L{\mathrm{L}}
\def\T{\mathrm{T}}

\DeclareMathOperator{\pf}{Pf}

\DeclareMathOperator{\st}{st}

\DeclareMathOperator{\Der}{Der}

\DeclareMathAlphabet{\mathbbold}{U}{bbold}{m}{n}

\def\k{\mathbbold{k}}

\title[Young's lattice, invariant ideals, and weak identities]{Young's lattice, invariant ideals, and weak identities}

\author{Vladimir Dotsenko}
\address{Institute of Mathematics, Academy of Mathematics and Systems Science, Chinese Academy of Sciences, Beijing, 100190, China}
\address{Institut de Recherche Math\'ematique Avanc\'ee, UMR 7501, Universit\'e de Strasbourg et CNRS, 7 rue Ren\'e-Descartes, 67000 Strasbourg, France}
\email{vdotsenko@unistra.fr}

\author{Artem Lopatin}
\address{Department of Mathematics, Instituto de Matem\'atica, Estat\'istica e
Computa\c{c}\~ao Cient\'ifica (IMECC), Universidade Estadual de
Campinas (UNICAMP), Campinas, SP, Brazil}
\email{artem\_lopatin@yahoo.com}

\date{}
\subjclass[2020]{Primary 16R10; Secondary 16S32, 20C30, 16R50}
\keywords{Young's lattice, weak polynomial identity, two-step nilpotent Lie algebra, Littlewood identity, Weyl algebra, multiplicity-free representation}

\begin{document}

\begin{abstract}
For a $d$-dimensional vector space $V$ over a field of characteristic zero, we show that $GL(V)$-stable ideals of the universal enveloping algebra of the free two-step nilpotent Lie algebra $\mathfrak n_2(V)$ are in natural correspondence with filters in Young's lattice of integer partitions with at most $d$ rows. The same classification holds for $GL(V)$-stable Poisson ideals of $S(V\oplus\Lambda^2(V))$ with its
natural Poisson bracket. As applications, we determine all weak polynomial identities of the $n$-th Weyl algebra relative
to its standard generating subspace and of the universal enveloping algebra of the $(2n+1)$-dimensional Heisenberg Lie algebra relative to
the Lie algebra itself. 
\end{abstract}

\maketitle
\section{Introduction}

\subsection{Littlewood identity and filters in Young's lattice}
In the theory of symmetric functions \cite{MR1354144}, there is a remarkable Schur function identity
 \[
\prod_{i}\frac{1}{1-x_i}\prod_{i<j}\frac{1}{1-x_ix_j}=\sum_{\lambda}s_\lambda(x),
 \]
where the sum on the right is over all integer partitions; in this form it seems to go back to Littlewood \cite[Sec.~11.9]{MR2127}, though an equivalent form of it was known already to Schur \cite{Schur1918}. Klyachko \cite[Th.~2]{Klyachko1978} found a beautiful categorification of this identity in the form of functorial isomorphisms, for finite-dimensional vector spaces $V$ over a field $\k$ of zero characteristic, of $GL(V)$-modules
 \[
U(\mathfrak{n}_2(V))\cong\bigoplus_{\lambda} S^{\lambda}(V), 
 \]
where the sum is over all integer partitions $\lambda$, $\mathfrak{n}_2(-)$ is the free two-step nilpotent Lie algebra, $S(-)$ is the symmetric algebra, and $S^\lambda(-)$ stands for the irreducible $GL(V)$-module corresponding to $\lambda$ (in fact, a thorough reader could perhaps infer this result already from older physics papers on parafermions such as~\cite{bracken1972}). This was later rediscovered by various authors, including Volichenko \cite[Prop.~2]{Volichenko_1984}, Drensky and Vladimirova \cite[Prop.~4]{Drensky_Vladimirova_1986}, and J\'{o}zefiak and Weyman~\cite{MR923671}; Gelfand and Zelevinsky \cite[p.~22]{MR757246} made an equivalent  observation, but replaced $U(\mathfrak{n}_2(V))$ by the isomorphic $GL(V)$-module $S(V\oplus\Lambda^2(V))$. 

Our first main result classifies $GL(V)$-stable ideals in $U(\mathfrak{n}_2(V))$ in terms of Young's lattice of integer partitions \cite{Kreweras1965}; in a sense, as $\dim(V)\to\infty$, it constructs a categorification of Young's lattice.
\begin{itheorem}[{Th.~\ref{th:A}}]
There is a natural bijection between $GL(V)$-stable ideals in  $U(\mathfrak{n}_2(V))$ and filters of Young's lattice restricted to partitions of length at most $\dim(V)$ (including the empty filter). 
\end{itheorem}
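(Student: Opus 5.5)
Since $U(\mathfrak n_2(V))\cong\bigoplus_\lambda S^\lambda(V)$ is multiplicity-free as a $GL(V)$-module (Klyachko's isomorphism), every $GL(V)$-stable subspace is a direct sum of isotypic components. So a $GL(V)$-stable ideal $I$ is determined by the set $F(I)=\{\lambda : S^\lambda(V)\subseteq I\}$, where $\lambda$ runs over partitions with $\ell(\lambda)\le d=\dim V$ (the others give $S^\lambda(V)=0$). The plan is to show two things. First, for every $\lambda$ the two-sided ideal generated by $S^\lambda(V)$ is exactly $\bigoplus_{\mu\supseteq\lambda}S^\mu(V)$. Second, sums of such ideals give everything. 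Then $I\mapsto F(I)$ is a bijection onto up-closed sets, i.e. filters, with $I=0$ corresponding to the empty filter.

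To carry this out I would work with the PBW filtration. By PBW, $\mathrm{gr}\,U(\mathfrak n_2(V))\cong S(V\oplus\Lambda^2V)$, and I would fix the bigrading by the numbers $(a,b)$ of generators from $V$ and from $\Lambda^2 V$. In the Gelfand–Zelevinsky model, the component $S^\lambda(V)$ sits in a definite bidegree: $|\lambda|=a+2b$, and $a$ equals the number of odd columns of $\lambda$. In fact $S(V\oplus\Lambda^2V)\cong S(V)\otimes S(\Lambda^2V)$, where $S(\Lambda^2V)=\bigoplus S^{\mu}$ over $\mu$ with even columns. Moreover the $GL(V)$-isomorphism $U\cong S(V\oplus\Lambda^2V)$ can be chosen compatible with the filtration (symmetrization), so the isotypic components of $U$ correspond. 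Both claims then reduce to computing products of isotypic components, up to lower-order terms.

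\textbf{Step 1 (upward closure).} I must show $S^\lambda\cdot V$ and $V\cdot S^\lambda$ lie in $\bigoplus_{\mu\supseteq\lambda}S^\mu$, and likewise for $\Lambda^2V$; since $V$ generates, $V$ suffices. By Pieri, $S^\lambda\otimes V=\bigoplus S^{\lambda+\square}$, so the image of $S^\lambda\otimes V\to U$ is a quotient of that module. By multiplicity-freeness it therefore lands only in components $S^{\lambda+\square}$. This gives $(\lambda)\subseteq\bigoplus_{\mu\supseteq\lambda}S^\mu$ immediately, with no filtration argument needed.

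\textbf{Step 2 (reaching every $\mu\supseteq\lambda$).} This is the main obstacle: I must show that the product $S^\lambda\cdot V\to S^{\lambda+\square}$ is nonzero for every addable box. I would prove that for a highest weight vector $v_\lambda$, the element $v_\lambda x_i$ (or $x_iv_\lambda$, or a combination $\sum \pm$ of commutators with lowering operators) has a nonzero projection to $S^{\lambda+\varepsilon_i}$ whenever $\lambda+\varepsilon_i$ is a partition. The cleanest route is probably to show $U$ has no zero divisors, which holds since it is the enveloping algebra of a Lie algebra. Then, for each $i$, take the highest weight vector of $S^{\lambda+\varepsilon_i}$ inside $S^\lambda\otimes V$ and map it to $U$. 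To see that the image is nonzero, I would pass to the associated graded and use explicit highest weight vectors in $S(V\oplus\Lambda^2 V)$: products of $x_1$, Pfaffian-type minors $x_{12}$, $\pf(x_{ij})_{i,j\le 4}$, and mixed minors. The highest weight vector of $S^\lambda$ is a monomial in these "column" generators, one for each column of $\lambda$. Adding a box to column $c$ changes one generator of size $k$ to size $k+1$ (a mixed $V$/$\Lambda^2$ minor). I would check, using the Laplace expansion of Pfaffians along a row, that this new element is obtained from the old one times $x_{k+1}$ plus lowering operators applied. In $\mathrm{gr}$, where everything is commutative and a domain, nonvanishing is clear.

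\textbf{Step 3 (conclusion).} Any $GL$-stable ideal is $\sum_{\lambda\in F(I)}(S^\lambda)$, and by Steps 1–2 this equals $\bigoplus_{\mu\in F(I)}S^\mu$ with $F(I)$ a filter. Conversely, for any filter $F$, $\bigoplus_{\mu\in F}S^\mu=\sum_{\lambda\in F}(S^\lambda)$ is an ideal. Hence the correspondence is bijective.
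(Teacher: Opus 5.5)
Steps 1 and 3 are fine, but Step 2 has a genuine gap, and it sits exactly where the content of the theorem lies. Take a box added to a column of \emph{odd} length, and use your own bigrading. Multiplication by $x_i$ in $\operatorname{gr}U=S(V\oplus\Lambda^2V)$ sends bidegree $(a,b)$ to $(a+1,b)$. But $\calE_\mu$ has bidegree $(a-1,b+1)$: the column becomes even, so one $x$ is traded for one $y$. Equivalently, the PBW degree $\sum_j\lceil\mu'_j/2\rceil$ of $\widehat{\calE}_\mu$ equals that of $\widehat{\calE}_\lambda$, rather than exceeding it by one.

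So the image in $U$ of the $\mu$-highest weight vector of $S^\lambda(V)\otimes V$ lies in a lower filtration piece. Its symbol one degree up, which is exactly the commutative product $\sum_j (L_jw_\lambda)x_j$ you propose to compute, vanishes identically. The smallest case is $\lambda=(1)$, $\mu=(1^2)$: $e_1\otimes e_2-e_2\otimes e_1\mapsto[e_1,e_2]\neq0$ in $U$, while $x_1x_2-x_2x_1=0$ in $\operatorname{gr}$. Hence ``in $\operatorname{gr}$ everything is commutative and a domain, so nonvanishing is clear'' fails for all such boxes.

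The domain property is also beside the point. You are testing a linear combination of products, not a single product of nonzero factors; it only settles the box added in the first row. Likewise, no Laplace expansion can write a new even Pfaffian column (no $x$'s) as ``old odd column times $x_{k+1}$ plus lowering operators'' (two $x$'s) in the commutative algebra. You mention commutators in passing, but the argument you actually propose discards them. This is also why Daszkiewicz's classification of ideals of the commutative algebra differs from Young's lattice.

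The missing idea is to keep the next-order term, namely the Poisson bracket $\{x_p,-\}=\sum_i y_{pi}\partial_{x_i}$ coming from commutators. One must show $w_\mu\in V\calE_\lambda+\{V,\calE_\lambda\}$ for every one-box extension. Then $\operatorname{gr}J_\lambda$, being a $GL(V)$-stable Poisson ideal containing $w_\lambda$, contains $w_\mu$, and this lifts back to $U$ by semisimplicity.

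The paper does this with the inner product for which $x_p$ is adjoint to $\partial_{x_p}$ and $\{x_p,-\}$ is adjoint to $\sum_{i\ne p}x_i\partial_{y_{pi}}$. Multiplicity-freeness and orthogonality of distinct components reduce the claim to a computation. For the first column, Pfaffian expansion shows that the relevant adjoint operator sends $w_\mu$ back to $w_\lambda$: $\partial_{x_p}$ in the even-to-odd case, the bracket adjoint in the odd-to-even case.

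A box in a column $s>1$ is reduced to the first-column case. Write $w_\mu=w_{\bar\mu}w_\rho$ with $\rho$ the first $s-1$ columns, and put $V'=\k\{e_1,\ldots,e_{\lambda'_s+1}\}$. Then $w_\rho$ transforms by $\det^{s-1}$ under $GL(V')$ and is Poisson-central for $V'$. Your sketch does not address this issue either, even in the even-to-odd case where a purely multiplicative argument would otherwise suffice: the lowering operators also act on the other column factors, and you need some device like this to control them.
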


The same is true if one views $S(V\oplus\Lambda^2(V))$ as a Poisson algebra with the only nonzero Poisson brackets of generators given by $\{v_1,v_2\}:=v_1\wedge v_2$, and studies $GL(V)$-stable Poisson ideals of that algebra. 

We note that $GL(V)$-stable ideals of $S(V\oplus\Lambda^2(V))$ as a commutative associative algebra were classified by Daszkiewicz \cite{MR1018957}, following the same classification for $S(S^2(V))$ by Abeasis~\cite{MR602662} and for $S(\Lambda^2(V))$ by Abeasis and Del Fra~\cite{MR560133}. The $GL(V)$-decompositions of $S(S^2(V))$ and $S(\Lambda^2(V))$ correspond to two other identities of Littlewood:
\begin{gather*}
\prod_{i}\frac{1}{1-x_i^2}\prod_{i<j}\frac{1}{1-x_ix_j}=\sum_{\lambda=2\mu}s_\lambda(x),\\
\prod_{i<j}\frac{1}{1-x_ix_j}=\sum_{\lambda'=2\mu'}s_\lambda(x),
\end{gather*}
concerning the sum of all Schur functions corresponding to partitions into even parts and the sum of all Schur functions corresponding to partitions with even multiplicities of all parts. Interestingly enough, in both of these cases, \cite[Th.~3.1]{MR602662} and \cite[Th.~3.1]{MR560133} show that $GL(V)$-stable ideals of the symmetric algebras are in natural bijection with filters of the corresponding sublattices of Young's lattice, while in the case of $S(V\oplus\Lambda^2(V))$ the lattice of $GL(V)$-stable ideals is more complicated: in addition of comparing partitions in Young's lattice, one has to additionally compare the number of odd parts in the conjugate partition \cite[Th.~4.2]{MR1018957}. Our approach gives a meaningful way to remove this discrepancy. 

\subsection{Weak polynomial identities of Weyl and Heisenberg algebras}
As a consequence of our main result, we establish a result about weak polynomial identities of associative algebras. We recall that, given a pair  $(\sfA,\sfV)$, where $\sfA$ is an associative algebra and $\sfV$ is a vector subspace of $\sfA$, an associative polynomial $f(x_1,\ldots,x_n)$ is said to be a \emph{weak polynomial identity} of that  pair if for all $v_1,\ldots,v_n\in\sfV$, the evaluation $f(v_1,\ldots,v_n)$ vanishes in $\sfA$. 

In recent work of the second-named author, Rodriguez Palma and Tang, for the first Weyl algebra $\sfA_1$ and the vector space $\sfV_1$  spanned by its standard generators, a conjectural description \cite[Conj.~1.1]{Lopatin_Rodriguez_Tang_2025} of all weak polynomial identities of the pair $(\sfA_1,\sfV_1)$ was proposed.  In this paper, we describe, for all $n\ge 1$, all weak polynomial identities of the pair $(\sfA_n,\sfV_n)$, where $\sfA_n$ is the $n$th Weyl algebra and $\sfV_n$ is the span of its standard generators, and the ideal of weak identities of the pair $(U(\LieH_{2n+1}),\LieH_{2n+1})$, where $\LieH_{2n+1}$ is the Heisenberg Lie algebra; both pairs are defined explicitly in Section~\ref{sec:pairs}. To state our second main result, we recall that the standard associative polynomial $\st_{k}(x_1,\ldots,x_{k})$ is defined by the formula 
 \[ 
\st_{k}(x_1,\ldots,x_{k})=\sum_{\sigma\in S_k}(-1)^\sigma x_{\sigma(1)}\cdots x_{\sigma(k)}. 
 \]
\begin{itheorem}[{Th.~\ref{th:B}}]
For each $n\ge 1$, 
\begin{enumerate}
\item all weak polynomial identities of the pair $(\sfA_n,\sfV_n)$ follow from the identities $[[x_1,x_2],x_3]=\st_{2n+1}(x_1,\ldots,x_{2n+1})=0$, 
\item all weak polynomial identities of the pair $(U(\LieH_{2n+1}),\LieH_{2n+1})$ follow from the identities $[[x_1,x_2],x_3]=\st_{2n+2}(x_1,\ldots,x_{2n+2})=0$.    
\end{enumerate}
\end{itheorem}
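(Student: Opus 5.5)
Both parts of Theorem~B will be reduced to Theorem~A. First, the identity $[[x_1,x_2],x_3]=0$ holds weakly for both pairs, because $[\sfV_n,\sfV_n]$ is central (scalars in $\sfA_n$, and the centre of $\LieH_{2n+1}$ in $U(\LieH_{2n+1})$). Modulo the T-ideal of weak identities generated by $[[x_1,x_2],x_3]$, the relatively free algebra on a $d$-dimensional space $V$ of variables is the associative algebra generated by $V$ with all commutators central. This is exactly $U(\mathfrak n_2(V))$. Every linear map $V\to\sfV$ extends to an algebra map $U(\mathfrak n_2(V))\to\sfA$, because $\sfV\oplus[\sfV,\sfV]$ is a two-step nilpotent Lie algebra. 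For $d$ variables, the ideal of weak identities modulo $[[x_1,x_2],x_3]$ is then the intersection $I_d$ of the kernels of all these maps. This intersection is $GL(V)$-stable. By Theorem~A it corresponds to a filter $\calF$ in Young's lattice, and (if the bijection is the natural one) $I_d=\bigoplus_{\lambda\in\calF}S^\lambda(V)$. So it suffices to determine which partitions $\lambda$ have $S^\lambda(V)\subset I_d$, and to identify the minimal elements of that filter.

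Next I find the minimal elements. The standard polynomial $\st_k$ is multilinear and alternating, so it lies in the copy of $S^{(1^k)}(V)$ in $U(\mathfrak n_2(V))$; by the Klyachko decomposition this copy is unique. I check that $\st_{2n+1}$ vanishes on $\sfV_n$. Modulo central commutators, $\st_{2k}$ is a nonzero multiple of the Pfaffian-type expression $\sum \pm [x,x]\cdots[x,x]$, and $\st_{2k+1}$ is a sum of terms $x_i\cdot\pf$. Evaluated on the $2n$-dimensional space $\sfV_n$, these alternating expressions in more than $2n$ vectors vanish by linear dependence. This gives $\st_{2n+1}=0$ on $\sfA_n$, where $\dim\sfV_n=2n$, and $\st_{2n+2}=0$ on $\LieH_{2n+1}$, where the dimension is $2n+1$. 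Hence $(1^{2n+1})\in\calF$ in case (1), and $(1^{2n+2})\in\calF$ in case (2).

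For the converse, I show that $S^\lambda(V)\not\subset I_d$ whenever $\lambda$ has fewer than $2n+1$ rows (respectively $2n+2$ rows). Since $\calF$ is an upward-closed set containing the column, this gives $\calF=\{\lambda:\ell(\lambda)\ge 2n+1\}$, respectively $\ell(\lambda)\ge 2n+2$. Theorem~A then says that $I_d$ is generated by the single column $\st_{2n+1}$, respectively $\st_{2n+2}$, as a $GL(V)$-stable ideal, and hence as a T-ideal of weak identities. This is uniform in $d$, which completes the proof.

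The hard part is the non-vanishing. I need a highest weight vector of $S^\lambda(V)$ inside $U(\mathfrak n_2(V))$, for instance a product of the column-type elements attached to the columns of $\lambda$ (Pfaffian-like elements built from $x_1,\dots,x_{\lambda'_j}$). I then need an evaluation into $\sfA_n$ under which it is nonzero. I would try mapping $x_1,\dots,x_{2n}$ to $q_1,p_1,\dots,q_n,p_n$, and pass to the associated graded algebra with respect to the Bernstein filtration. There the image becomes a product of polynomial minors and Pfaffians in the commutative algebra $S(\sfV_n)$, whose leading terms can be checked to be nonzero. In case (2) I would do the same with the additional central generator $z$ of $\LieH_{2n+1}$ serving as the $(2n+1)$-st vector, working in $\mathrm{gr}\,U(\LieH_{2n+1})=S(\LieH_{2n+1})$. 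Controlling the leading terms of these products so that the Pfaffian and odd-column contributions do not cancel is the main obstacle. If it fails, I would fall back on an induction along Young's lattice: verify non-vanishing for the maximal partitions with at most $2n$ rows of a given degree, and use that $\calF$ is upward-closed.
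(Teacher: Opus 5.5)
Your reduction is the same as the paper's. You pass to $U(\mathfrak n_2(V_\infty))$, use the filter classification (Corollary~\ref{cor:twisted-invariant-id}), and check that $[[x_1,x_2],x_3]$ and $\st_{2n+1}$ (resp.\ $\st_{2n+2}$) are weak identities. It then remains to show that $\hat w_\lambda$ does not vanish on $\sfV_n$ (resp.\ $\LieH_{2n+1}$) whenever $\ell(\lambda)\le 2n$ (resp.\ $\le 2n+1$). The gap is that this last step, the only part of Theorem~B not already contained in Theorem~A, is left open, and the route you sketch does not work as described. The associated graded algebra of the Bernstein filtration is a commutative polynomial ring. There the class of $\st_k(v_1,\ldots,v_k)$, $v_i\in\sfV_n$, in its nominal degree $k$ is a standard polynomial evaluated in a commutative algebra, hence zero for every $k\ge2$; the true value has filtration degree $0$ or $1$. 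So there are no nonzero ``polynomial minors and Pfaffians'' in $S(\sfV_n)$ to inspect, and leading-term bookkeeping at the nominal degree cannot detect non-vanishing. The same happens in $\operatorname{gr}U(\LieH_{2n+1})=S(\LieH_{2n+1})$.

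The missing idea is to compute each column factor exactly, using that commutators of elements of $\sfV_n$ are scalars (multiples of the central element $\sfz$ in the Heisenberg case). Use the expansion $\st_{2j}(a_1,\ldots,a_{2j})=2^{-j}\sum_{\tau\in S_{2j}}(-1)^\tau[a_{\tau(1)},a_{\tau(2)}]\cdots[a_{\tau(2j-1)},a_{\tau(2j)}]$, valid in any associative algebra. Evaluating at $(\sfy_1,\sfx_1,\ldots,\sfy_j,\sfx_j)$ leaves only the pairings $\{\sfy_i,\sfx_i\}$ and gives $j!\cdot 1$. Appending $\sfy_{j+1}$, which commutes with the other arguments, gives $\st_{2j+1}=j!\,\sfy_{j+1}$. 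In $U(\LieH_{2n+1})$ one gets $j!\,\sfz^j$ and $j!\,\sfz^j\sfy_{j+1}$, and $n!\,\sfz^{n+1}$ when $\sfz$ is the $(2n+1)$st argument. Hence $\hat w_\lambda$ maps to a nonzero multiple of a monomial in the $\sfy_i$ (and $\sfz$). There is no cancellation between columns to control, since each factor is a nonzero element of a domain.

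Your fallback can be made precise and even shortens this. The complement of a filter is downward closed, and every $\lambda$ with at most $2n$ rows lies in some rectangle $(m^{2n})$. So it suffices to see $\st_{2n}^m\mapsto (n!)^m\neq0$ (resp.\ $\st_{2n+1}^m\mapsto (n!)^m\sfz^{m(n+1)}$). As phrased (``maximal partitions of a given degree'') it does not make sense, because distinct partitions of the same size are incomparable. It also still needs the value of $\st_{2n}(\sfy_1,\sfx_1,\ldots,\sfy_n,\sfx_n)$.

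A smaller point: to say that $\st_{2n+1}$ generates the whole $(1^{2n+1})$ component, you need $\st_{2n+1}\neq0$ in $U(\mathfrak n_2(V))$. The paper gets this by mapping to the exterior algebra, where $\st_k\mapsto k!\,e_1\wedge\cdots\wedge e_k$.
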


It is interesting to note that, from the point of view of the theory of identities of Lie algebras \cite{MR4368864}, it is not possible to distinguish between $\LieH_{2n+1}$ and $\LieH_{2m+1}$ for $m\ne n$: in both cases, all Lie identities follow from the two-step nilpotence identity $[[x_1,x_2],x_3]=0$. However, weak identities allow one to distinguish between these Lie algebras. 

\subsection{Methods used in this work}

Proofs of both theorems stated above are of representation-theoretic nature, but they use two complementary approaches to representations of general linear groups on $U(\mathfrak{n}_2(V))$. For the first theorem, we construct the highest weight vectors using Pfaffians of skew-symmetric matrices. This way, the adjoint action of $\mathfrak{n}_2(V)$ can be implemented using differential operators acting on polynomial rings, and that action can be studied via the useful technique of harmonic analysis of introducing a scalar product for which operators of multiplication by a coordinate and partial derivatives become adjoint to each other \cite[Ch.~3]{MR754767}. For the second theorem, we construct highest weight vectors using standard polynomials in free associative algebras, and carefully investigate their evaluations. 

\subsection{Broader context of our work}
Historically, the main motivation to study invariant ideals in the abovementioned symmetric algebras came from invariant theory: for an algebraic group $G$ acting on an affine variety $M$, information about $G$-stable ideals in the algebra of regular functions $\k[M]$ sheds light on questions of algebra and geometry of closures of $G$-orbits in $M$ (in our particular cases, the $G$-action on $\k[M]$ is multiplicity-free, and after extension to an algebraic closure of $\k$, the action on $M$ has finitely many orbits~\cite{MR320173}). More recently, it became apparent that an explicit description of the lattice of $GL(V)$-stable ideals of a twisted commutative algebra \cite{SamSnowden2012}, that is, a commutative algebra object in polynomial functors of $V$, is a first step towards proving the Noetherian property of the category of $A(V)$-modules $M(V)$ with a compatible $GL(V)$-action, see, e.g., \cite[Sec.~4.1]{MR3477338}. That Noetherian property was proved for all twisted commutative algebras finitely generated by polynomial functors of degree $1$ by Snowden \cite{MR3018955}, for $S(S^2(\mathbb C^\infty))$ and $S(\Lambda^2(\mathbb C^\infty))$ by Nagpal, Sam, and Snowden \cite[Th.~1.1]{MR3477338}, and for $S(\mathbb C^\infty\oplus S^2(\mathbb C^\infty))$ and $S(\mathbb C^\infty\oplus\Lambda^2(\mathbb C^\infty))$ by Sam and Snowden \cite[Th.~1.1]{Sam_Snowden_2022}. The latter result is particularly notable, since it can be used to show that the category of polynomial $GL(\infty)$-equivariant modules over $U(\mathfrak n_2(\mathbb C^\infty))$ is locally Noetherian \cite[Th.~1.2]{Sam_Snowden_2022}. Our Theorem~\ref{th:A} complements that result by explicitly classifying the
$GL(V)$-stable ideals of $U(\mathfrak n_2(V))$ and the $GL(V)$-stable Poisson ideals of $S(V\oplus\Lambda^2(V))$ through Young-diagram filters. As a byproduct, our work builds a new bridge between the theory of twisted algebras and the theory of weak identities in algebras, which we believe to be of independent interest.

\subsection*{Structure of the paper} The paper is organised as follows. In Section \ref{sec:twisted}, we recall basic definitions of the theory of weak identities in algebras and of the theory of twisted algebras, for any variety of algebras, and highlight the little known fact that those two theories essentially study the same objects. In Section~\ref{sec:stable}, we prove Theorem~\ref{th:A}. In Section~\ref{sec:pairs}, we deduce from it Theorem~\ref{th:B}, and establish its corollaries for cocharacters and codimensions of weak identities; interestingly enough, the codimensions of weak identities for the pair $(U(\LieH_{3}),\LieH_{3})$ are given by Motzkin numbers. Finally, in Section \ref{sec:open}, we discuss several open problems naturally arising from our work.

\subsection*{Conventions}
All vector spaces in this work are defined over a field $\k$ of zero characteristic. For vectors $v_1,\ldots,v_r$, their linear span is denoted by $\k\{v_1,\ldots,v_r\}$. We denote by $\Pi$ the set of non-negative integer \emph{partitions}, that is, non-increasing sequences of integers
$\lambda=(\lambda_1\ge\ldots\ge\lambda_k>0)$; for $d\ge0$, we denote $\Pi_d=\{\lambda\in\Pi:\ell(\lambda)\le d\}$. A filter in $\Pi_d$ is a subset closed upward under containment of Young diagrams. For a partition $\lambda$, we put $|\lambda|=\lambda_1+\cdots+\lambda_k$ and $\ell(\lambda)=k$; in particular, we allow $\ell(\lambda)=0$, in which case $\lambda=\varnothing$ is the empty partition and $|\varnothing|=0$. If $|\lambda|=n$, we write $\lambda\vdash n$.  For a partition $\lambda$, we may define the \emph{conjugate} partition $\lambda'$ with $\lambda'_i:=|\{p\colon\lambda_p\ge i\}|$. For two partitions $\lambda=(\lambda_1\ge\lambda_2\ge\ldots\ge\lambda_k)$ and $\mu=(\mu_1\ge\ldots\ge\mu_l)$, we say $\lambda\subseteq\mu$ if we have $k\le l$ and $\lambda_i\le\mu_i$ for all $i=1,\ldots,k$. We write $[a,b]$ for the commutator $ab-ba$ in an associative algebra, and $\{a,b\}$ for the bracket in a Poisson algebra.

\section{Twisted algebras and weak polynomial identities}\label{sec:twisted}

In this section, we discuss a relationship between the theory of twisted algebras and the theory of weak identities in algebras; this relationship has remained mostly unnoticed for decades, and we believe that recording it precisely is of independent interest. For simplicity, we concern ourselves with the case of algebras belonging to some variety $\mathfrak{M}$ of nonassociative algebras, that is, the category of $\k$-linear algebras with one binary operation stable under taking subalgebras, products, and homomorphic images; by the so-called HSP theorem of Birkhoff \cite[Th.~11.9]{MR648287}, this means that we consider the category of algebras satisfying a system of identities on the binary operation, and our assumption on the characteristic of the ground field implies that all these identities can be chosen multilinear \cite{MR668355}. Results of this section apply without any change to algebras with several not necessarily binary operations satisfying multilinear identities, that is, algebras over an operad \cite{MR2954392}.  

\subsection{Twisted algebras}

In recent years, the notion of a twisted commutative [associative] algebra has attracted quite a bit of attention due to its applications in questions of commutative algebra and algebraic geometry; see the paper of Erman, Sam and Snowden \cite{MR4300226} for one beautiful example. For a good introduction to this notion, we suggest the comprehensive survey of Sam and Snowden~\cite{SamSnowden2012}. Twisted algebras of other types have been studied before: the earliest reference known to us is the work of Barratt \cite{MR513566} on twisted Lie algebras (and twisted associative algebras as their universal envelopes); later Joyal \cite{MR927763} indicated that the key definitions are available for an arbitrary variety of algebras. In a sense, the notion is almost present in the well-known paper by Kelly \cite{MR2177746} written in 1971 but properly published only in 2005: it discusses the Day convolution of symmetric sequences, which is precisely the symmetric monoidal structure needed to define twisted algebras. While twisted algebras and coalgebras have been abundant in research in algebraic topology (e.g. configuration spaces), beyond the commutative case, they only appear in a handful of places in algebra, and even when they do appear, they  usually appear implicitly, see, e.g. the work of Etingof and Pak \cite{MR2390493} on the MacMahon Master Theorem.

Let us recall three equally useful viewpoints of twisted algebras for any variety $\mathfrak{M}$ of $\k$-linear algebras; see \cite[Sec.~5,6, 8.1]{SamSnowden2012} for a discussion of these in the commutative associative case. 

The first viewpoint regards a twisted algebra as a family of $\mathfrak{M}$-algebras $A(V)$ that depend on a vector space $V$ and are constructed out of  $V$ in a particular canonical way. Concretely, we require that the underlying vector space of $A(V)$ is a direct sum of homogeneous polynomial functors \cite[Sec.~I.A]{MR1354144}, and the algebra operations are natural transformations. The examples $\mathfrak n_2(V)$, $U(\mathfrak n_2(V))$, and $S(V\oplus\Lambda^2(V))$ belong to this category. They give twisted Lie, associative, and Poisson algebras, respectively; the last is also a twisted commutative associative algebra if one ignores its Poisson bracket, and was considered as such in \cite{Sam_Snowden_2022}.

For the second viewpoint, a twisted algebra is an $\mathfrak{M}$-algebra $A$ equipped with an action of the infinite general linear group $GL(\infty,\k)=\bigcup_{n\ge 1}GL(n,\k)$ by algebra automorphisms, so that as a $GL(\infty,\k)$-module, $A$ is a direct sum of polynomial representations, subquotients of direct sums of tensor powers of $V_\infty=\bigcup_{d\ge1}\k^d$. A twisted algebra in the sense of the previous paragraph gives us a twisted algebra in this sense, if we take $V=V_\infty$.

Finally, the third viewpoint suggests that a twisted algebra is an $\mathfrak{M}$-algebra $A$ which has a grading compatible with the product, so that the algebra structure is given by maps $A_n\otimes A_m\to A_{n+m}$, and each $A_n$ is given a right action of the symmetric group $S_n$ for which the product is equivariant with respect to the subgroups $S_n\times S_m\subset S_{n+m}$. Relating this approach to the first one goes via the Schur--Weyl duality between representations of symmetric groups and general linear groups~\cite[Sec.~5.18]{MR2808160}. Concretely, for a twisted associative algebra $A(V)$, one can consider its ``multilinear parts'' $A_m$ defined by
 \[
A_m:=A(V_\infty)_{1^m},   
 \]
where the right-hand side denotes the subspace of $A(\k^m)\subset A(V_\infty)$ consisting of eigenvectors of eigenvalue $t_1\cdots t_m$ of the diagonal matrix in $GL(m,\k)\subset GL(\infty,\k)$ with entries $t_1,\ldots,t_m$. These multilinear parts have the necessary $S_n\times S_m$-equivariant products $A_n\otimes A_m\to A_{n+m}$, and one can reconstruct the original twisted algebra by the formula 
 \[
A(V)=\bigoplus_{m\ge 0}A_m\otimes_{\k S_m}V^{\otimes m},  
 \]
see \cite[Sec.~I.A]{MR1354144}.

\subsection{Weak polynomial identities}

Let us recall basic notions of the theory of weak polynomial identities for subspaces of algebras. This goes back to the work of
Razmyslov~\cite{Razmyslov_1973_AL,Razmyslov_1973_USSR}, who used weak polynomial identities to study identities and central polynomials of matrix algebras. Strictly speaking, he was only considering the case where the algebra $\sfA$ is associative and the subspace $\sfV$ is a Lie subalgebra of $\sfA$ with respect to the operation $[a,b]=ab-ba$, but these constraints are not necessary to impose. In the fifty years since weak identities were introduced, many authors studied this notion in various contexts (nonassociative algebras, superalgebras, ground field of positive characteristic, finite ground field etc.). We do not attempt to give a detailed account of that work, and invite the reader to consult the recent survey of Drensky \cite{Drensky_2021} for an excellent summary of numerous results in this research area. 

Let $(\sfA,\sfV)$ be a pair consisting of an algebra $\sfA$ of some variety $\mathfrak{M}$ (Lie, associative, commutative,\ldots) and a subspace $\sfV\subseteq\sfA$. Let us denote by $F_{\mathfrak{M}}\langle X\rangle$ the free algebra of the same variety with the generating set~$X$. An element $f\in F_{\mathfrak{M}}\langle X\rangle$ is said to be a \emph{weak polynomial identity} (or simply a \emph{weak identity}) of the pair $(\sfA,\sfV)$ if for every function $\alpha\colon X\to\sfV$ the induced evaluation map $\alpha_*\colon F_{\mathfrak{M}}\langle X\rangle\to \sfA$ satisfies $\alpha_*(f)=0$. 

For the reader who has never considered weak identities, an instructional example is given by the polynomial $[x_1^2,x_2]=x_1^2x_2-x_2x_1^2$ in the free associative algebra generated by $x_1$ and $x_2$. This polynomial is well known to be a weak identity for the pair $(M_2(\k),\mathfrak{sl}_2(\k))$ consisting of all $2\times2$-matrices and its subspace of trace-zero matrices. Indeed, the square of a $2\times 2$-matrix with zero trace is known to be a scalar matrix, and hence $[x_1^2,x_2]=0$ whenever $x_1,x_2\in M_2(\k)$ satisfy $\mathrm{tr}(x_1)=0$. However, for some choices of $x_1,x_2\in M_2(\k)$, we have $[x_1^2,x_2]\ne 0$: for instance, one can take the standard matrix units $x_1=E_{11}$, $x_2=E_{12}$.

A central definition in the theory of weak identities is that of an \emph{$\mathsf{L}$-ideal}. By definition, an $\mathsf{L}$-ideal of the free algebra $F_{\mathfrak{M}}\langle X\rangle$ is an ideal stable under all algebra endomorphisms induced by linear endomorphisms of the vector space spanned by~$X$. 

\begin{remark}
We note that the work of Volichenko \cite{Volichenko_1984} mentioned in the introduction uses the term ``$\L$-ideal'' for a different notion: he means ideals that are stable under substitutions of arbitrary Lie elements. In particular, his classification result \cite[Th.~1]{Volichenko_1984} concerns ideals that are stable under substitutions of arbitrary Lie elements and additionally are stable under all derivations $\partial/\partial\xi$, $\xi\in V^*$, that send $v\in V$ to $\xi(v)1$.
\end{remark}

Let us record a simple result that asserts that different notions discussed in this section are in fact the same. 

\begin{proposition}\label{prop:equiv-weak-twist}
For $X=\{x_i\}_{i\ge 1}$, the following types of ideals in the free algebra $F_{\mathfrak{M}}\langle X\rangle$ coincide:
\begin{enumerate}
\item $GL(\infty,\k)$-stable ideals of $F_{\mathfrak{M}}\langle X\rangle$.
\item $\L$-ideals of $F_{\mathfrak{M}}\langle X\rangle$,
\item ideals of $F_{\mathfrak{M}}\langle X\rangle$ consisting of all weak identities of some pair $(\sfA,\sfV)$.
\end{enumerate}
\end{proposition}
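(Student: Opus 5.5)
We prove the cycle of inclusions (1)$\Rightarrow$(2)$\Rightarrow$(3)$\Rightarrow$(1), writing $W=\k\{x_i\}_{i\ge1}$ for the span of the generators, so that $F_{\mathfrak{M}}\langle X\rangle$ is naturally an algebra on which every linear endomorphism of $W$ acts by an algebra endomorphism.

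For (3)$\Rightarrow$(1), let $I$ be the set of weak identities of a pair $(\sfA,\sfV)$. It is an ideal, since evaluations are algebra homomorphisms. For any linear endomorphism $\phi$ of $W$ and any $\alpha\colon X\to\sfV$, the composite $\alpha_*\circ\phi_*$ is again the evaluation at the function $x_i\mapsto\alpha_*(\phi(x_i))\in\sfV$, using that $\sfV$ is a subspace. Hence $\phi_*(I)\subseteq I$ for every linear endomorphism $\phi$, so $I$ is in particular stable under $GL(\infty,\k)$.

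For (1)$\Rightarrow$(2), which is the only substantive step, let $I$ be $GL(\infty,\k)$-stable and $f\in I$. Suppose $f$ involves only $x_1,\ldots,x_n$, and let $\phi$ be an arbitrary linear endomorphism of $W$. Then $\phi_*(f)$ depends only on $\phi(x_1),\ldots,\phi(x_n)$, and these lie in $\k\{x_1,\ldots,x_N\}$ for some $N\ge n$. Consider the finite-dimensional space $\mathrm{End}(\k^N)$, acting on the span of $x_1,\ldots,x_N$. The set of $\psi\in\mathrm{End}(\k^N)$ with $\psi_*(f)\in I$ is cut out by linear conditions that depend polynomially on the entries of $\psi$, since $\psi_*(f)$ lies in a fixed finite-dimensional homogeneous subspace. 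We extend each $g\in GL(N,\k)$ by the identity on $x_i$ with $i>N$ to get an element of $GL(\infty,\k)$, so this set contains $GL(N,\k)$. As $\k$ is infinite, $GL(N,\k)$ is Zariski dense in $\mathrm{End}(\k^N)$, so the set is all of $\mathrm{End}(\k^N)$, and $\phi_*(f)\in I$.

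Alternatively, one could use the fact that $I$ is $GL$-stable, hence a sum of polynomial $GL$-submodules, and so is stable under the action of the monoid $\mathrm{End}$. The Zariski density argument is the more direct route, and it is the only place where characteristic zero, in fact just infiniteness of $\k$, is used.

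For (2)$\Rightarrow$(3), given an $\L$-ideal $I$, take $\sfA=F_{\mathfrak{M}}\langle X\rangle/I$ and let $\sfV$ be the image of $W$. If $f\in I$ and $\alpha\colon X\to\sfV$, we lift $\alpha$ to a linear map $\phi\colon W\to W$. Then $\alpha_*(f)$ is the image of $\phi_*(f)\in I$, which is zero. Conversely, if $f$ is a weak identity, we evaluate at the tautological map $x_i\mapsto \bar x_i$, and the result is $\bar f=0$, so $f\in I$. Therefore $I$ is exactly the ideal of weak identities of $(\sfA,\sfV)$, which closes the cycle.
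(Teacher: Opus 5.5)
Your proof is correct and follows the same cycle as the paper. The (3)$\Rightarrow$(1) and (2)$\Rightarrow$(3) steps are the same substitution and quotient arguments. Your Zariski-density argument for (1)$\Rightarrow$(2) is the same idea as the paper's interpolation along $M+tI$. The paper first splits the ideal into homogeneous components using scalar matrices, which you avoid by working in a finite-dimensional subspace containing all $\psi_*(f)$. One small point to tidy: $\phi$ need not map $x_{n+1},\ldots,x_N$ into $\k\{x_1,\ldots,x_N\}$, so define $\psi$ on those generators arbitrarily (say by $0$); this does not change $\psi_*(f)=\phi_*(f)$.
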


\begin{proof}
Suppose that $J$ is a $GL(\infty,\k)$-stable ideal of $F_{\mathfrak{M}}\langle X\rangle$, that $f\in J$, and that $\phi\colon V_\infty\to V_\infty$ is a linear map. We note that $f$ depends on finitely many generators, and the images of those generators under $\phi$ involve finitely many generators, so to prove that the image of $f$ under $\phi$ belongs to $J$, we may restrict ourselves to the subalgebra $F_{\mathfrak{M}}\langle x_1,\ldots,x_n\rangle$ for some $n$. Considering the action of scalar matrices in $GL_n$, we see that a stable ideal must be homogeneous, that is, together with each element $f$, it contains all its homogeneous components individually. Our assumption means that  for each homogeneous component of degree $d$, $J_d$ is a $GL_n(\k)$-submodule of $F_{\mathfrak{M}}\langle x_1,\ldots,x_n\rangle_d$. Note that for each matrix $M\in M_n(\k)$, the action of $M+tI$ on the homogeneous component of the free algebra depends polynomially on $t$. For all but finitely many $t\in\k$, the matrix $M+tI$ is invertible,  and for such values the action of this matrix preserves $J_d$; interpolation recovers the value at $t=0$, which therefore lies in $J_d$ as well. 

Next, suppose that $J$ is an $\L$-ideal of $F_{\mathfrak{M}}\langle X\rangle$. Taking $\sfA=F_{\mathfrak{M}}\langle X\rangle/J$ and defining $\sfV$ as the subspace spanned by the image of $X$ in $\sfA$, we see that $J$ is the ideal of weak identities of $(\sfA,\sfV)$.

Finally, suppose that $J$ is the ideal of weak identities of some pair $(\sfA,\sfV)$. Then the action of $GL(\infty,\k)$ on an element of $J$ amounts, after evaluating on elements of $\sfV$, to replacing arguments by their linear combinations; those still belong to the subspace $\sfV$, and therefore the evaluation vanishes. 
\end{proof}

Another important notion in the theory of weak identities is that of \emph{cocharacters} and \emph{codimensions} of ideals of weak identities. These notions arise naturally from the third viewpoint of twisted algebras. Specifically, if we denote by $\operatorname{Id}(\sfA,\sfV)$ the ideal of $F_{\mathfrak{M}}\langle X\rangle$ consisting of weak identities of some pair $(\sfA,\sfV)$, the $m$-th weak codimension and the $m$-th weak cocharacter of the pair $(\sfA,\sfV)$ are defined by
 \[
c_m(\sfA,\sfV):=\dim\bigl(\left[F_{\mathfrak{M}}\langle X\rangle/\operatorname{Id}(\sfA,\sfV)\right]_{1^m}\bigr) , \qquad \chi_m(\sfA,\sfV):=\chi\bigl(\left[F_{\mathfrak{M}}\langle X\rangle/\operatorname{Id}(\sfA,\sfV)\right]_{1^m}\bigr), 
 \]
where $\chi(-)$ denotes the character of an $S_m$-module.

\section{Stable Poisson and associative ideals}\label{sec:stable}

\subsection{Recollections}\label{subsec:recollections}
Let $V$ be a vector space with $\dim(V)=d<\infty$. We assume that the reader is fluent in the theory of finite-dimensional polynomial representations of the group $GL(V)$ \cite[Chapter 5]{MR2808160}. In particular, finite-dimensional polynomial representations are semisimple, and simple polynomial representations are indexed by $\lambda\in\Pi$ with $\ell(\lambda)\le d$. If we choose an ordered basis of $V$, a highest weight vector is a nonzero common eigenvector for the upper triangular subgroup of $GL_d$, and this vector is said to be of weight $\lambda$ if the diagonal matrix with entries $t_1,\ldots,t_d$ multiplies it by $t_1^{\lambda_1}\cdots t_d^{\lambda_d}$. Each finite-dimensional irreducible polynomial representation has a one-dimensional highest weight space and is generated by any nonzero vector in it. 
For $\lambda\in\Pi$, we shall denote by $\calE_\lambda$ the isotypic component of $S^{\lambda}(V)$ in $S(V\oplus\Lambda^2(V))$ and by $\widehat{\calE}_\lambda$ the isotypic component of $S^{\lambda}(V)$ in $U(\mathfrak{n}_2(V))$; the Schur function identity from the introduction implies that for $\ell(\lambda)\le d$ we have $\calE_\lambda\cong \widehat{\calE}_\lambda\cong S^\lambda(V)$ and $\calE_\lambda=\widehat{\calE}_\lambda=0$ otherwise.

\subsection{Pfaffians and highest weight vectors} 

Let us choose a basis $e_1,\ldots, e_d$ of $V$, and identify the symmetric algebra $S(V\oplus\Lambda^2(V))$ with the polynomial algebra in variables $x_i$, $1\le i\le d$, and $y_{rs}$, $1\le r,s\le d$ satisfying $y_{sr}=-y_{rs}$ and $y_{rr}=0$; thus, the independent variables are $x_i$, $1\le i\le d$, and $y_{rs}$, $1\le r<s\le d$. In order to study $GL(V)$-stable ideals of this algebra, we shall need to access highest weight vectors of all its irreducible constituents; thus, we should exhibit one polynomial $w_\lambda$ for each partition $\lambda\in\Pi$, $\ell(\lambda)\le d$. These polynomials are defined as follows. For one-column partitions $\lambda=(1^l)$, $1\le l\le d$, we set
\begin{equation}\label{eq:Pfaff}
\begin{cases}
\qquad \qquad \left(\sum\limits_{1\le r<s\le l} y_{r,s}e_r\wedge e_s\right)^{\wedge m}
=:m!w_{1^l}e_1\wedge\cdots\wedge e_{l},\qquad  \text{ if } l=2m,\\
\left(\sum\limits_{i=1}^l x_ie_i\right)\wedge  
\left(\sum\limits_{1\le r<s\le l} y_{r,s}e_r\wedge e_s\right)^{\wedge m}
=:m!w_{1^l}e_1\wedge\cdots\wedge e_{l}, \quad \text{ if } l=2m+1.
\end{cases}     
\end{equation}
For general $\lambda$, we define 
 \[
w_\lambda=\prod_{j=1}^{\lambda_1}w_{1^{\lambda'_j}}, 
 \]
where for $\lambda=\varnothing$ we put $w_\lambda=1$.
 
The fact that these polynomials are highest weight vectors of  weight $\lambda$ is obvious: under our convention, the action of $GL(V)$ on $x_i$ is given by its action on $e_i$, and the action on $y_{ij}$ is given by its action on $e_i\wedge e_j$, so the action on $w_{1^l}$ comes from the action on $e_1\wedge\cdots\wedge e_{l}$, which is a highest weight vector of weight $(1^l,0^{d-l})$. Products of these give a highest weight vector of weight $\lambda$, since the polynomial algebra has no zero divisors.

These formulas are given by classical polynomials known as \emph{Pfaffians} of skew-symmetric matrices; our formulas can be interpreted as
 \[
w_{1^l}=
\begin{cases}
\qquad \qquad \pf(y_{rs})_{1\le r,s\le l},\qquad \qquad \quad l=2m,\\
 \sum\limits_{i=1}^l(-1)^{i-1}x_i\pf(y_{rs})_{1\le r,s\ne i\le l}, \qquad l=2m+1 .
\end{cases}
 \]
We refer the reader to \cite[App.~D]{MR1639468} for basic properties of Pfaffians.

\subsection{Main theorem}

In this section, we shall classify $GL(V)$-stable ideals in $U(\mathfrak{n_2}(V))$ and all $GL(V)$-stable Poisson ideals in the algebra $S(V\oplus\Lambda^2(V))$ equipped with the Poisson structure for which the only non-zero Poisson brackets of generators are given by $\{v_1,v_2\}:=v_1\wedge v_2$ for $v_1,v_2\in V$. The main nontrivial step of the proof is made by the following proposition. 

\begin{proposition}\label{prop:Poisson}
Let $\lambda\in\Pi$ with $\ell(\lambda)\le d$, and denote by $I_\lambda$ the $GL(V)$-stable Poisson ideal in $S(V\oplus\Lambda^2(V))$ generated by the highest weight vector $w_\lambda$. Then for every partition $\mu$ with $\ell(\mu)\le d$ and $\lambda\subseteq\mu$, we have $w_\mu\in I_\lambda$. 
\end{proposition}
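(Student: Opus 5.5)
The plan is to reduce to the case where $\mu$ is obtained from $\lambda$ by adding a single box, and then to show that $I_\lambda$ has a nonzero component in the isotypic component $\calE_\mu$. Iterating along a saturated chain $\lambda=\lambda^{(0)}\subset\lambda^{(1)}\subset\cdots\subset\lambda^{(k)}=\mu$ in $\Pi_d$ then gives $w_\mu\in I_{\lambda^{(k-1)}}\subseteq\cdots\subseteq I_\lambda$, since each $I_{\lambda^{(i)}}$ is the smallest $GL(V)$-stable Poisson ideal containing $w_{\lambda^{(i)}}$.

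For the one-box step, the reduction is as follows. Since $I_\lambda$ is $GL(V)$-stable, it decomposes as a direct sum of its intersections with the isotypic components. By the Schur function identity recalled in Subsection~\ref{subsec:recollections}, $\calE_\mu\cong S^\mu(V)$ is irreducible. Hence if some element of $I_\lambda$ has a nonzero projection onto $\calE_\mu$, then $\calE_\mu\subseteq I_\lambda$, and in particular $w_\mu\in I_\lambda$. To detect such projections I will use the harmonic-analysis scalar product on $\k[x_i,y_{rs}]$, for which the monomials are orthogonal and multiplication by a variable is adjoint to the corresponding partial derivative. First extend scalars to $\mathbb{R}$ or $\mathbb{C}$ if needed; this is harmless because all statements are about rational polynomials. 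This form is invariant under the orthogonal or unitary subgroup, so distinct isotypic components are orthogonal and $\langle w_\mu,w_\mu\rangle>0$. Therefore it suffices to produce $f\in I_\lambda$ with $\langle f,w_\mu\rangle\neq0$.

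Let the new box go into column $j$, which has length $a$ in $\lambda$ and $a+1$ in $\mu$. Comparing weights with the bidegree of $w_\mu$ (each odd column carries one $x$), there are two cases.
\begin{enumerate}
\item If $a$ is even, the $x$-degree must increase by one. I take $f=x_{a+1}w_\lambda\in I_\lambda$. Then $\langle f,w_\mu\rangle=\langle w_\lambda,\partial_{x_{a+1}}w_\mu\rangle$. Expanding $w_{1^{a+1}}=\sum_i(-1)^{i-1}x_i\pf(\cdots)$, the term in which $\partial_{x_{a+1}}$ hits the factor for column $j$ gives $\pm w_\lambda$.
\item If $a$ is odd, the $x$-degree must drop by one and the $y$-degree rise by one. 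I take $f=\{x_{a+1},w_\lambda\}=\sum_k y_{a+1,k}\,\partial_{x_k}w_\lambda\in I_\lambda$. Its adjoint operator is $\sum_k x_k\partial_{y_{a+1,k}}$, and I will compute $\langle w_\lambda,\sum_k x_k\partial_{y_{a+1,k}}w_\mu\rangle$ using the Laplace-type expansion of the Pfaffian $w_{1^{a+1}}$ along the index $a+1$, which converts it into the odd Pfaffian $w_{1^a}$.
\end{enumerate}

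The main obstacle is the nonvanishing of these pairings. The derivative also hits the other column factors $w_{1^{\lambda'_i}}$ with $\lambda'_i\ge a+1$, which contain the index $a+1$ as well. This produces extra terms that are not obviously orthogonal to $w_\lambda$ and could in principle cancel the main term.

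I would first try to show that each extra term has weight $\lambda$ but vanishes under the raising operators only after symmetrization, so that its $\calE_\lambda$-component is a nonnegative multiple of $w_\lambda$ of the same sign as the main term. If that fails, I would instead choose the box to add as the one in the rightmost column of length $a$. Then all columns of length at least $a+1$ lie to the left, and I would order the computation column by column, by induction on $\lambda_1$. This should let me factor off $w_{1^{\lambda'_1}}$, which is a Pfaffian in indices strictly containing $\{1,\ldots,a+1\}$, and reduce to the case where the column receiving the box is the longest one.
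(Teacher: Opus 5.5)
Your framework is sound, and it is the paper's first step: reduce to one-box steps, detect $\calE_\mu\subseteq I_\lambda$ through a nonzero pairing $\langle f,w_\mu\rangle$, and take $f=x_{a+1}w_\lambda$ or $f=\{x_{a+1},w_\lambda\}$ according to the parity of $a$. When the box lengthens the first column, every other factor of $w_\mu$ involves only indices $\le a$. So $\partial_{x_{a+1}}$, resp.\ $\sum_k x_k\partial_{y_{a+1,k}}$, hits only the new Pfaffian, and the pairing is $\pm\langle w_\lambda,w_\lambda\rangle\neq0$. (For orthogonality of isotypic components, use $\langle\rho(M)P,Q\rangle=\langle P,\rho(M^t)Q\rangle$ for all $M\in GL_d$, or unitary invariance of the Hermitian form over $\mathbb{C}$. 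Invariance under $O(d)$ alone does not separate $GL(V)$-isotypic components.)

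\textbf{The gap.} When the box goes into a column $j>1$, the nonvanishing of the pairing is the whole content of the proposition, and you leave it open. Your first strategy, that the cross terms have $\calE_\lambda$-components of the same sign as the main term, comes with no argument. Your second strategy points the right way but lacks what makes factoring legitimate. Let $\tilde\lambda$ be $\lambda$ with its first column deleted. The form is not multiplicative on products sharing variables, so $w_{1^{\lambda'_1}}$ cannot be pulled out of the pairing. And under the full $GL(V)$, $\calE_{\tilde\lambda}\,w_{1^{\lambda'_1}}\not\subseteq\calE_\lambda$: for $d\ge3$, $x_3y_{12}\in\calE_{(1)}w_{1^2}$ pairs nontrivially with $w_{1^3}$, so it is not in $\calE_{(2,1)}$. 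Hence a statement about $\tilde\lambda$ over the same $V$ does not transfer to $\lambda$ by multiplying by $w_{1^{\lambda'_1}}$. (Also, the new box necessarily sits in the leftmost column of length $a$; equal columns give equal factors, so ``choosing'' the rightmost one changes nothing.)

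\textbf{The missing idea.} Shrink the group before factoring, and transport membership statements rather than pairings. Write $\lambda=\bar\lambda+\rho$, where $\rho$ consists of the columns of $\lambda$ longer than $a$, and let $k$ be the number of columns of $\rho$. Restrict to $GL(V')$ for $V'=\k\{e_1,\ldots,e_{a+1}\}$, acting trivially on the remaining basis vectors. Then $\ell(\bar\mu)=a+1=\dim V'$ and the box lengthens the first column of $\bar\lambda$. So your first-column computation inside $S(V'\oplus\Lambda^2(V'))$ gives $w_{\bar\mu}\in V'\calE'_{\bar\lambda}+\{V',\calE'_{\bar\lambda}\}$. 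Two facts carry this to $w_\mu=w_{\bar\mu}w_\rho$:
\begin{enumerate}
\item Every column of $\rho$ has length at least $a+1$, so $M(w_\rho)=\det(M)^kw_\rho$ for $M\in GL(V')$. Hence $M(w_{\bar\lambda})w_\rho=\det(M)^{-k}M(w_\lambda)\in\calE_\lambda$, and $V'\calE'_{\bar\lambda}w_\rho\subseteq V\calE_\lambda$.
\item The Pfaffian cofactor identity $\sum_{j\le l}(-1)^{j-1}y_{ij}\pf(y_{rs})_{r,s\le l,\ r,s\ne j}=0$ for $i\le l$, $l$ odd, gives $\{x_i,w_{1^l}\}=0$ for $i\le l$. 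Hence $\{v,f\}w_\rho=\{v,fw_\rho\}$ for $v\in V'$, and $\{V',\calE'_{\bar\lambda}\}w_\rho\subseteq\{V,\calE_\lambda\}$.
\end{enumerate}
Peeling off one column at a time, as in your induction on $\lambda_1$, also works if each step restricts to $GL$ of the span of the first $\lambda'_1$ basis vectors and uses these same two facts. Without the change of group and these two identities, your argument covers only the one-box steps that create a new column or lengthen the first column.
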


\begin{proof}
The Schur modules $S^\lambda(V)$, the chosen highest weight vectors $w_\lambda$, and the Poisson algebra structure are defined over rationals, so it is enough to prove the assertion over $\mathbb Q$. Let us denote
 \[
\calD_\lambda:=V\calE_\lambda+\{V,\calE_\lambda\},   
 \]
where the first summand is the span of all products $vf$ with $v\in V$ and $f\in\calE_\lambda$, and the second summand is the span of all brackets $\{v,f\}$ with the same $v$ and $f$. It is enough to prove the assertion for $|\mu|-|\lambda|=1$, since the general case follows along a chain of one-box extensions.

The step $\varnothing\to(1)$ follows from $w_\varnothing=1$ and $w_{(1)}=x_1$. If a new last column is created in a nonempty diagram,
then $\mu=(\lambda_1+1,\lambda_2,\ldots)$ and 
 \[
w_\mu=x_1w_\lambda\in V\calE_\lambda\subset I_\lambda.   
 \]
We may therefore assume that an existing column is increased, and put $k=\lambda_1$.

Of course, the action of $V\oplus\Lambda^2(V)$ on $S(V\oplus\Lambda^2(V))$ by multiplications corresponds to the action of the generators $x_i$, $y_{rs}$ on the polynomial algebra by the same multiplications. Recall that for any Lie algebra $\mathfrak{g}$, and all $z,z_1,\ldots,z_q\in \mathfrak{g}$, the Poisson action of $z$ on $z_1\cdots z_q\in S^q(\mathfrak{g})$ is given by 
 \[
\{z,z_1\cdots z_q\}=\sum_{i=1}^qz_1\cdots z_{i-1}[z,z_i]z_{i+1}\cdots z_q,
 \]
so in our particular case, the Poisson action of $e_p$ corresponds, in the coordinates $x_i$, $y_{rs}$, to the action of the differential operator 
 \[
\sum_{i=1}^d y_{pi}\partial_{x_i}. 
 \]

The usual way of studying actions of differential operators on the polynomial ring \cite[Ch.~3]{MR754767} can be applied in our case. Let us define a bilinear form on $\mathbb{Q}[x_i,y_{jk}]$ by the formula
 \[
\langle P(x_i,y_{jk}),Q(x_i,y_{jk})\rangle =P(\partial_{x_i},\partial_{y_{jk}})[Q(x_i,y_{jk})] |_{x_i=y_{jk}=0}. 
 \]
By direct inspection, 
 \[
\left\langle \prod_{i}x_i^{p_i} \prod_{j<k}  y_{jk}^{p_{jk}},
\prod_{i}x_i^{q_i} \prod_{j<k} y_{jk}^{q_{jk}}\right\rangle =
\prod_{i}p_i!\delta_{p_i,q_i} \prod_{j<k}p_{jk}!\delta_{p_{jk},q_{jk}},
 \]
so this bilinear form is symmetric and positive definite, and the adjoints of multiplication operators are partial derivatives. 

The action of $GL(V)$ on $V\oplus\Lambda^2(V)$ induces an action on $S(V\oplus\Lambda^2(V))$. For our chosen coordinates, a matrix 
 $M\in GL_{d}$ acting on the variables $x_i$ gives rise to a matrix $\Lambda^2M$ acting on the variables $y_{jk}$, and more generally to an action on $\mathbb{Q}[x_i,y_{jk}]$ which we shall denote $\rho(M)$. Note that we have $\Lambda^2(M^t)=\Lambda^2(M)^t$, and more generally 
 \[
\langle \rho(M)(P),Q\rangle = \langle P, \rho(M^t)(Q)\rangle.  
 \]
In particular, if $U$ is a $GL(V)$-submodule, then its orthogonal complement with respect to our symmetric bilinear form is also a $GL(V)$-submodule. Since our symmetric bilinear form is positive definite over $\mathbb Q$, and the representation we consider is multiplicity-free, it follows that its distinct irreducible summands are mutually orthogonal.

The operator of multiplication by $e_p$ on $S(V\oplus\Lambda^2(V))$ corresponds to the multiplication of the polynomial ring by the variable $x_p$, and we noted that the Poisson action of $e_p$ corresponds to the action of the differential operator $\sum_{i=1}^d y_{pi}\partial_{x_i}$. Suppose that $w_\mu\notin\calD_\lambda$. Since $\calD_\lambda$ is a $GL(V)$-submodule of a multiplicity-free module, it then contains
no component of highest weight $\mu$. Orthogonality therefore gives
 \[
\langle e_p w_\lambda, w_\mu\rangle=\langle \{e_p,w_\lambda\},w_\mu\rangle=0  
 \]
for all $p=1,\ldots,d$. Since the corresponding adjoint operators are $\partial_{x_p}$ and 
 \[
\sum_{i\ne p}x_i\partial_{y_{pi}}=-\sum_{i\ne p}x_i\partial_{y_{ip}}, 
 \]
we conclude that 
 \[
\langle w_\lambda, \partial_{x_p}(w_\mu)\rangle=\langle w_\lambda,\sum_{i\ne p}x_i\partial_{y_{ip}}(w_\mu)\rangle=0  
 \]
for all $p=1,\ldots,d$.

The argument now consists of two steps. Suppose first that $\mu$ is obtained from $\lambda$ by increasing the part $\lambda_1'$:
 \[
\mu'=(\lambda'_1+1,\ldots,\lambda'_k). 
 \]
Let us take $p=\lambda'_1+1$. The element $w_\mu$ is equal to the product of $w_{1^p}$ and elements $w_{1^{\lambda_2'}}$, \ldots, $w_{1^{\lambda_k'}}$ that only depend on variables $x_i$ and $y_{rs}$ with $1\le i,r,s<p$. Therefore, we have
\begin{gather*}
\partial_{x_p}(w_\mu)=\partial_{x_p}(w_{1^p}) w_{1^{\lambda_2'}}\cdots w_{1^{\lambda_k'}},\\
\sum_{i\ne p}x_i\partial_{y_{ip}}(w_\mu)=\sum_{i\ne p}x_i\partial_{y_{ip}}(w_{1^p})w_{1^{\lambda_2'}}\cdots w_{1^{\lambda_k'}}.
\end{gather*}
If $p$ is odd, then our definition of $w_{1^p}$ immediately implies that $\partial_{x_p}(w_{1^p})=w_{1^{p-1}}=w_{1^{\lambda_1'}}$, and therefore $\partial_{x_p}(w_\mu)=w_\lambda$, implying that
 \[
\langle w_\lambda, \partial_{x_p}(w_\mu)\rangle=\langle w_\lambda, w_\lambda \rangle \ne 0,
 \]
which is a contradiction. Suppose that $p$ is even. Then either directly from Formula \eqref{eq:Pfaff} or using the Pfaffian expansion \cite[Eq.~(D.1)]{MR1639468}, we can write
\begin{multline*}
\sum_{i\ne p}x_i\partial_{y_{ip}}(w_{1^p})=
\sum_{i\ne p}x_i\partial_{y_{ip}}\sum_{j\le p-1}y_{jp}(-1)^{j-1}
\pf(y_{rs})_{1\le r,s\ne j\le p-1}
\\=\sum_{j=1}^{p-1}(-1)^{j-1}x_j
\pf(y_{rs})_{1\le r,s\le p-1,\ r,s\ne j}= w_{1^{p-1}}=w_{1^{\lambda_1'}},
\end{multline*}
so we have
 \[
\langle w_\lambda, \sum_{i\ne p}x_i\partial_{y_{ip}}(w_\mu)\rangle=\langle w_\lambda, w_\lambda \rangle \ne 0,
 \]
which is a contradiction, hence $w_\mu\in\calD_\lambda$.

Suppose now that an existing column $s>1$ is increased, so that
\[
 \mu'=(\lambda'_1,\ldots,\lambda'_s+1,\ldots,\lambda'_k),
 \qquad \lambda'_s<\lambda'_{s-1}.
\]
Write $\lambda=\bar\lambda+\rho$ and $\mu=\bar\mu+\rho$, where
addition is componentwise and
$\rho'=(\lambda'_1,\ldots,\lambda'_{s-1})$. Thus
$\bar\lambda'=(\lambda'_s,\ldots,\lambda'_k)$, and $\bar\mu$ is
obtained by increasing its first column. Put $d'=\lambda'_s+1$ and
$V'=\k\{e_1,\ldots,e_{d'}\}$. We embed $GL(V')$ into
$GL(V)$ by letting it act identically on the remaining basis vectors.
The first step, applied to $V'$, gives
\[
 w_{\bar\mu}\in V'\calE'_{\bar\lambda}
                  +\{V',\calE'_{\bar\lambda}\},
\]
where $\calE'_{\bar\lambda}$ is the isotypic component of highest weight $\bar\lambda$ in $S(V'\oplus\Lambda^2(V'))$. Thus, 
\[
 w_\mu=w_{\bar\mu}w_\rho\in V'\calE'_{\bar\lambda}w_\rho
                  +\{V',\calE'_{\bar\lambda}\}w_\rho,
\]
and it is enough to show that 
 \[
V'\calE'_{\bar\lambda}w_\rho\subset V\calE_\lambda \quad \text{ and }\quad  \{V',\calE'_{\bar\lambda}\}w_\rho\subset \{V,\calE_\lambda\}.  
 \]
To establish the first of those assertions, we note that every column of $\rho$ has height at least $d'$, so each of its columns transforms according to the determinant representation under the action of $GL(V')$, and therefore 
\[
 M(w_\rho)=\det(M)^{s-1}w_\rho
\]
for $M\in GL(V')$, implying that for such $M$, we have
\[
 M(w_{\bar\lambda})w_\rho
   =\det(M)^{-(s-1)}M(w_{\bar\lambda}w_\rho)
   =\det(M)^{-(s-1)}M(w_\lambda)\in\calE_\lambda.
\]
Since $\calE'_{\bar\lambda}$ is spanned by $M(w_{\bar\lambda})$ for $M\in GL(V')$, this implies $V'\calE'_{\bar\lambda}w_\rho\subset V\calE_\lambda$, as required.

For the second assertion, we shall establish the formula
\begin{equation}\label{eq:column_poisson_central}
 \{x_i,w_{1^l}\}=0
\end{equation}
that is valid for $1\le i\le l\le d$. For even $l$, the polynomial $w_{1^l}$ involves only the Poisson-central
variables $y_{rs}$, so the result is trivial. For odd $l$, put
\[
 q_j=(-1)^{j-1}\pf(y_{rs})_{1\le r,s\le l,\ r,s\ne j}.
\]
The Pfaffian cofactor identity is $\sum_{j=1}^l y_{ij}q_j=0$ for $1\le i\le l$. Indeed, append to the skew-symmetric matrix $(y_{rs})_{1\le r,s\le l}$ a first row and column identical to its $i$th row and column. The resulting even-order Pfaffian is zero by alternation; expansion in its first row gives this identity. Since $w_{1^l}=\sum_jx_jq_j$, it proves \eqref{eq:column_poisson_central} in the odd case as well.

Now the claim $\{V',\calE'_{\bar\lambda}\}w_\rho\subset \{V,\calE_\lambda\}$ is obvious: Equation \eqref{eq:column_poisson_central} and the Leibniz rule give $\{V',w_\rho\}=0$, so for all $f\in \calE'_{\bar\lambda}$, we have $\{v,f\}w_\rho=\{v,fw_\rho\}\in\{V,\calE_\lambda\}$, as required.
\end{proof}

In order to convert this result into a result about ideals in $U(\mathfrak{n}_2(V))$, recall that for a Lie algebra $\mathfrak{g}$, we can equip $U(\mathfrak g)$ with its usual PBW filtration, in which every element of $\mathfrak g$ has filtration degree $1$, and we have an isomorphism 
 \[
\operatorname{gr}U(\mathfrak g)\cong S(\mathfrak g),
 \] 
which is compatible with the Poisson algebra structures. Moreover, there is a linear map
 $
\sigma\colon S(\mathfrak g)\to U(\mathfrak g),   
 $
called the symmetrization and given, for $g_1\cdots g_n\in S^n(\mathfrak g)$, by 
 \[
\sigma(g_1\cdots g_n)=\frac1{n!}\sum_{\sigma\in S_n}g_{\sigma(1)}\cdots g_{\sigma(n)}   
 \]
and satisfying $\operatorname{gr}\sigma(g_1\cdots g_n)=g_1\cdots g_n$.

To state the following corollary, let us denote by $\hat{w}_\lambda$ a nonzero highest weight vector of the only $GL(V)$-submodule in
$U(\mathfrak{n}_2(V))$ that is isomorphic to $S^\lambda(V)$. Note that for $\mathfrak g=\mathfrak n_2(V)$, the symmetrization map is $GL(V)$-equivariant, so we may choose $\hat w_\lambda=\sigma(w_\lambda)$.

\begin{corollary}\label{cor:Assoc}
Let $\lambda\in\Pi$ with $\ell(\lambda)\le d$, and denote by $J_\lambda$ the $GL(V)$-stable ideal in $U(\mathfrak{n}_2(V))$ generated by $\hat{w}_\lambda$. Then for every partition $\mu$ with $\ell(\mu)\le d$ and $\lambda\subseteq\mu$, we have $\hat{w}_\mu\in J_\lambda$. 
\end{corollary}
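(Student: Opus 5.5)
We deduce Corollary~\ref{cor:Assoc} from Proposition~\ref{prop:Poisson} by passing to the associated graded algebra for the PBW filtration. As there, it suffices to treat one-box extensions $|\mu|=|\lambda|+1$, because $J_\lambda\supseteq J_\nu$ whenever $\hat w_\nu\in J_\lambda$. We argue by induction on the PBW filtration degree.

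The proof of Proposition~\ref{prop:Poisson} actually gives more than membership in $I_\lambda$: it shows $w_\mu\in\calD_\lambda=V\calE_\lambda+\{V,\calE_\lambda\}$, with the step $\varnothing\to(1)$ trivial. Since both summands are $GL(V)$-submodules and the module is multiplicity-free, we may write
\[
w_\mu=\sum_p\bigl(x_pf_p+\{x_p,g_p\}\bigr),\qquad f_p,g_p\in\calE_\lambda .
\]
Every element here is homogeneous for the grading in which $V$ has degree $1$ and $\Lambda^2(V)$ has degree $2$; this grading is the grading by $|\lambda|$, i.e.\ by the scalar action. We now lift each term to $U(\mathfrak n_2(V))$. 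Because $\sigma$ is $GL(V)$-equivariant, $\hat f_p:=\sigma(f_p)$ and $\hat g_p:=\sigma(g_p)$ lie in $\widehat\calE_\lambda\subseteq J_\lambda$. We then form
\[
u=\sum_p\bigl(e_p\hat f_p+[e_p,\hat g_p]\bigr)\in J_\lambda .
\]
In $\operatorname{gr}U(\mathfrak n_2(V))\cong S(\mathfrak n_2(V))$ the symbol of $e_p\hat f_p$ is $x_pf_p$ in PBW degree $\deg f_p+1$. The commutator $[e_p,\hat g_p]$ has PBW degree at most $\deg g_p$, and its symbol in that degree is $\{x_p,g_p\}$. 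Here we need the Poisson bracket on $S(\mathfrak g)$ to be of degree $-1$.

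The main obstacle is the mismatch of PBW degrees. The bracket term drops PBW degree by one, since $y$'s have PBW degree $1$ but weight degree $2$. So the PBW degrees of the various terms need not agree, and $w_\mu$ itself may have non-homogeneous PBW symbols. To handle this, I would use the finer fact that $\widehat\calE_\lambda$ is a single irreducible, so $u$ lies in the $\mu$-isotypic component of $U(\mathfrak n_2(V))$. That component is irreducible, so it suffices to show $u\neq0$: then $u$ generates $\widehat\calE_\mu\ni\hat w_\mu$, and $\hat w_\mu\in J_\lambda$.

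To see $u\ne0$, I would project $u$ to the $\mu$-component. Applying $\sigma^{-1}$ there and taking the top PBW symbol, we must check that the leading-degree parts of the two sums do not cancel. The cleanest route is to avoid mixing altogether by revisiting the proof of Proposition~\ref{prop:Poisson}. In each case the contradiction comes from a single operator, either $\partial_{x_p}$ (odd $p$) or $\sum_i x_i\partial_{y_{ip}}$ (even $p$). Hence $w_\mu$ has nonzero projection to $V\calE_\lambda$ alone, or to $\{V,\calE_\lambda\}$ alone, respectively.

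In the first case $w_\mu\in V\calE_\lambda$, and the lift $\sum_p e_p\hat f_p$ has symbol equal to the nonzero $\mu$-projection, so it is nonzero. In the second case $w_\mu\in\{V,\calE_\lambda\}$, and the lift $\sum_p[e_p,\hat g_p]$ has nonzero top symbol $w_\mu$. Either way $u\neq0$, which completes the induction step.
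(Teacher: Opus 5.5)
\textbf{There is a gap in the final step.} You claim that, because $\widehat\calE_\lambda$ is irreducible, $u$ lies in the $\mu$-isotypic component, so that $u\neq0$ suffices. That claim is false. Irreducibility only puts $u$ in $V\widehat\calE_\lambda+[V,\widehat\calE_\lambda]$, which is a sum of isotypic components indexed by all one-box extensions of $\lambda$, not just $\mu$.

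Moreover, the lift is not determined by the product $\sum_px_pf_p$ alone. For two-step nilpotent $\mathfrak g$ one has $e_p\sigma(f)=\sigma(x_pf)+\tfrac12\sigma(\{x_p,f\})$, while the relation $\sum_px_pf_p=w_\mu$ fixes the $f_p$ only up to the kernel of multiplication $V\otimes\calE_\lambda\to S(V\oplus\Lambda^2(V))$. Concretely, take $d\ge3$, $\lambda=(2,1)$, $\mu=(2,1,1)$, which is your first case. Replacing $f_1,f_2$ by $f_1-t\,x_2y_{12}$ and $f_2+t\,x_1y_{12}$ leaves $\sum_px_pf_p$ unchanged. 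But it changes $u$ by $t(e_2e_1-e_1e_2)[e_1,e_2]=-t[e_1,e_2]^2$, a nonzero element of $\widehat\calE_{(2,2)}$. So your closing conclusions (``it is nonzero'', ``$u\neq0$'') do not give $\hat w_\mu\in J_\lambda$. What you must show is that the $\mu$-isotypic projection $\pi_\mu(u)$ is nonzero. That projection then lies in $J_\lambda$, because a $GL(V)$-stable subspace of a semisimple multiplicity-free module contains the isotypic components of its elements.

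\textbf{How to repair it.} The missing ingredient is the opposite of your worry about inhomogeneity. The element $w_\mu$ is PBW-homogeneous of degree $\sum_j\lceil\mu'_j/2\rceil$. Each $\calE_\nu$ lies in a single $S^k(\mathfrak n_2(V))$, since every $S^k$ is $GL(V)$-stable and $S(\mathfrak n_2(V))$ is multiplicity-free. Consequently:
\begin{enumerate}
\item In any decomposition $w_\mu=\sum_px_pf_p+\sum_p\{x_p,g_p\}$, one of the two sums vanishes for degree reasons. You do not need to reopen the proof of Proposition~\ref{prop:Poisson} for this.
\item You should lift only the surviving kind of term. In the bracket case, keeping the $f$-terms could even cancel $\hat w_\mu$.
\item In the bracket case, $\sum_p[e_p,\sigma(g_p)]=\sigma\bigl(\sum_p\{x_p,g_p\}\bigr)=\hat w_\mu$ exactly, since $\sigma$ intertwines the adjoint actions.
\item In the product case, $\sigma^{-1}(u)=w_\mu+r$ with $r$ of lower PBW degree. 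Isotypic projections commute with $\sigma$ and preserve the grading, so $\pi_\mu(u)=\sigma(w_\mu)=\hat w_\mu$.
\end{enumerate}

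\textbf{Comparison with the paper.} Repaired this way, your route works, but it is more laborious than the paper's, which lifts nothing. There, $\operatorname{gr}J_\lambda$ is a $GL(V)$-stable Poisson ideal containing the symbol $w_\lambda$ of $\hat w_\lambda$. Hence it contains $I_\lambda\ni w_\mu$ by the statement of Proposition~\ref{prop:Poisson}. Then $\operatorname{gr}J_\lambda\cong J_\lambda$ as $GL(V)$-modules, together with multiplicity-freeness, gives $\widehat\calE_\mu\subseteq J_\lambda$. That argument uses the Proposition as a black box and handles all $\mu\supseteq\lambda$ at once. Yours depends on the one-box refinement $w_\mu\in\calD_\lambda$ from inside its proof.
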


\begin{proof}
We use the isomorphism of the Poisson algebras 
 \[
\operatorname{gr}U(\mathfrak{n}_2(V))\cong S(\mathfrak{n}_2(V)).
 \]
Since $J_\lambda$ is an ideal containing $\hat{w}_\lambda$, $\operatorname{gr}J_\lambda$ is a $GL(V)$-stable Poisson ideal containing $w_\lambda$. Thus $\operatorname{gr}J_\lambda$ contains $I_\lambda$ and, by Proposition~\ref{prop:Poisson}, also contains $w_\mu$, and hence the isotypic component of $S^\mu(V)$. Since the associated graded construction is $GL(V)$-equivariant and polynomial representations of $GL(V)$ are semisimple, this implies that the ideal $J_\lambda$ also contains the isotypic component of $S^\mu(V)$, and, in particular, contains $\hat w_\mu$.
\end{proof}

\begin{remark}\label{rem:one-sided}
In fact, every $GL(V)$-stable left ideal of $U(\mathfrak n_2(V))$ is two-sided. To see this, let $\iota$ be the antiautomorphism of the tensor algebra $T(V)$ induced by reversal of words, so that $\iota(v)=v$ for $v\in V$. It is well defined on the quotient by the triple commutators and commutes with $GL(V)$. By multiplicity freeness, it preserves each irreducible component, and hence every $GL(V)$-stable subspace. If $J$ is a stable left ideal, $a\in J$, and $b\in U(\mathfrak n_2(V))$, then $\iota(b)\iota(a)\in J$. Applying $\iota$ gives $ab\in J$. Thus the same classification applies to stable left ideals. 
\end{remark}

We are now ready to prove the main result of this section. 

\begin{theorem}\label{th:A}
There is a natural bijection between $GL(V)$-stable ideals in the universal enveloping algebra $U(\mathfrak{n}_2(V))$ and filters of Young's lattice restricted to partitions of length at most $\dim(V)$ (including the empty filter). The same is true if one views $S(V\oplus\Lambda^2(V))$ as a Poisson algebra with the only nonzero Poisson brackets of generators given by $\{v_1,v_2\}:=v_1\wedge v_2$, and studies $GL(V)$-stable Poisson ideals of that algebra.
\end{theorem}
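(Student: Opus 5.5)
The plan is to define the two maps explicitly and check that they are mutually inverse, using multiplicity freeness, Proposition~\ref{prop:Poisson} and Corollary~\ref{cor:Assoc}. To a $GL(V)$-stable ideal $J\subseteq U(\mathfrak n_2(V))$ I associate the set
\[
F(J)=\{\lambda\in\Pi_d:\widehat{\calE}_\lambda\subseteq J\}.
\]
To a filter $F\subseteq\Pi_d$ I associate
\[
J(F)=\bigoplus_{\lambda\in F}\widehat{\calE}_\lambda .
\]
The same construction, with $\calE_\lambda$ in place of $\widehat{\calE}_\lambda$, will handle Poisson ideals of $S(V\oplus\Lambda^2(V))$.

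The first step is to show that any $GL(V)$-stable ideal $J$ is the direct sum of the isotypic components it contains. Since $J$ is a $GL(V)$-submodule of a direct sum of polynomial representations, it is semisimple. Because $U(\mathfrak n_2(V))\cong\bigoplus_{\lambda\in\Pi_d}S^\lambda(V)$ is multiplicity-free, every irreducible submodule of $J$ must coincide with one of the $\widehat{\calE}_\lambda$. Hence $J=\bigoplus_{\lambda\in F(J)}\widehat{\calE}_\lambda$, and $J$ is determined by $F(J)$.

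Next, $F(J)$ is a filter. If $\lambda\in F(J)$, then $\hat w_\lambda\in J$, so $J_\lambda\subseteq J$. By Corollary~\ref{cor:Assoc}, $\hat w_\mu\in J$ for every $\mu\in\Pi_d$ with $\lambda\subseteq\mu$. Since $\widehat{\calE}_\mu$ is irreducible and generated by $\hat w_\mu$, it lies in $J$, so $\mu\in F(J)$.

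Conversely, given a filter $F$, I must show that $J(F)$ is an ideal. It is clearly $GL(V)$-stable. Note that
\[
\sum_{\lambda\in F}J_\lambda\supseteq J(F).
\]
By the previous step, each $J_\lambda$ is the sum of the components it contains, and by Corollary~\ref{cor:Assoc} together with the decomposition, all of these lie in the filter generated by $\lambda$. I also need the reverse containment: every component contained in $J_\lambda$ has index $\mu\supseteq\lambda$. This is the main obstacle, because it requires the upward direction to be exact, i.e.\ that $J_\lambda$ contains no $\widehat{\calE}_\nu$ with $\nu\not\supseteq\lambda$.

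For this I plan to use the Pieri rule together with the PBW grading. The ideal $J_\lambda$ equals $U\cdot\widehat{\calE}_\lambda\cdot U$, and $U$ is generated by $V$. Multiplying $\widehat{\calE}_\lambda$ on the left or right by $V$ lands in the image of $V\otimes S^\lambda(V)$ (respectively $S^\lambda(V)\otimes V$). By Pieri's rule, these images decompose only into components $S^\nu(V)$ with $\nu$ obtained from $\lambda$ by adding one box. Iterating, every component reached has $\nu\supseteq\lambda$, so $J_\lambda=\bigoplus_{\mu\supseteq\lambda}\widehat{\calE}_\mu$. Consequently $J(F)=\sum_{\lambda\in F}J_\lambda$ is an ideal, and $F(J(F))=F$. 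Combined with the first step, which gives $J(F(J))=J$, this establishes the bijection. It is natural because everything is $GL(V)$-equivariant, and the empty filter corresponds to the zero ideal.

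For the Poisson statement I will repeat the argument verbatim with $\calE_\lambda$ and $I_\lambda$. Upward closure comes from Proposition~\ref{prop:Poisson}. The Poisson ideal $I_\lambda$ is spanned by iterated products with $V\oplus\Lambda^2(V)$ and brackets with $V$ applied to $\calE_\lambda$; brackets with $\Lambda^2(V)$ vanish, and brackets with $V$ reduce to Leibniz applied to products. Each such operation is a $GL(V)$-map from $W\otimes S^\lambda(V)$ with $W$ equal to $V$ or $\Lambda^2(V)$. By Pieri's rule for $V$ and for $\Lambda^2(V)$ (adding two boxes in distinct rows), every resulting component again satisfies $\nu\supseteq\lambda$, which gives exactness exactly as before.
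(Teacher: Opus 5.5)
Your proposal is correct and follows the paper's proof. Both decompose a stable ideal into isotypic components via multiplicity-freeness, get upward closure from Corollary~\ref{cor:Assoc} and Proposition~\ref{prop:Poisson}, and use Pieri's rule to show $\bigoplus_{\lambda\in F}\widehat{\calE}_\lambda$ is closed under multiplication by $V$ on both sides. The differences are cosmetic: you phrase the converse through the principal ideals $J_\lambda=\bigoplus_{\mu\supseteq\lambda}\widehat{\calE}_\mu$, and in the Poisson case you handle multiplication by $\Lambda^2(V)$ with the Pieri rule for $\Lambda^2(V)$ rather than the paper's identity $y_{ij}f=\{x_i,x_jf\}-x_j\{x_i,f\}$.
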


\begin{proof}
We claim that the two correspondences in the statement are
\begin{equation}\label{eq:filter_correspondence}
 \calF\longmapsto\bigoplus_{\lambda\in\calF}\calE_\lambda,
 \qquad
 \calF\longmapsto\bigoplus_{\lambda\in\calF}\widehat{\calE}_\lambda,
\end{equation}
where $\calF$ ranges over the filters in $\Pi_d$.

Every $GL(V)$-stable subspace is a sum of simple components. If it is a Poisson ideal in $S(V\oplus\Lambda^2(V))$, Proposition~\ref{prop:Poisson} makes its set of highest weights upward closed. If it is an ideal in $U(\mathfrak{n}_2(V))$,
Corollary~\ref{cor:Assoc} gives the same conclusion.

Conversely, let $\calF\subseteq\Pi_d$ be a filter. Pieri's formula \cite[Sec.~I.5]{MR1354144} gives
\[
 V\otimes S^\lambda(V)\cong
 \bigoplus_{\substack{\mu\in\Pi_d,\ \lambda\subseteq\mu\\
                      |\mu|=|\lambda|+1}}S^\mu(V).
\]
In the Poisson algebra, both multiplication by $V$ and the Poisson
bracket with $V$ are $GL(V)$-equivariant maps from
$V\otimes\calE_\lambda$. Hence their images contain only the
one-box extensions in this formula. The first sum
in~\eqref{eq:filter_correspondence} is therefore closed under both
operations. For any element $f$ of this sum, the identity
\[
 y_{ij}f=\{x_i,x_jf\}-x_j\{x_i,f\}
\]
shows that it is also closed under multiplication by every $y_{ij}$; since the variables $y_{ij}$ are Poisson-central, closure under Poisson brackets with $y_{ij}$ is an empty condition. Since the $x_i$ and $y_{ij}$ generate the polynomial algebra, the sum $\bigoplus_{\lambda\in\calF}\calE_\lambda$ is a Poisson ideal.

In the enveloping algebra, the left and the right multiplication give equivariant maps $V\otimes\widehat{\calE}_\lambda\to U(\mathfrak{n}_2(V))$ and
$\widehat{\calE}_\lambda\otimes V\to U(\mathfrak{n}_2(V))$. The same Pieri's formula shows
that the sum $\bigoplus_{\lambda\in\calF}\widehat{\calE}_\lambda$ is closed under multiplication by $V$ on either side. As $V$ generates
$U(\mathfrak n_2(V))$, that sum is a two-sided ideal.
\end{proof}

Let us record an immediate corollary of this result using a different viewpoint on twisted algebras.

\begin{corollary}\label{cor:twisted-invariant-id}
There is a natural bijection between $GL(\infty,\k)$-stable ideals in the universal enveloping algebra $U(\mathfrak{n}_2(V_\infty))$ and filters of Young's lattice (including the empty filter). The same is true if one views $S(V_\infty\oplus\Lambda^2(V_\infty))$ as a Poisson algebra with the only nonzero Poisson brackets of generators given by $\{v_1,v_2\}:=v_1\wedge v_2$, and studies $GL(\infty,\k)$-stable Poisson ideals of that algebra.
\end{corollary}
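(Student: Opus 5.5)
The plan is to deduce the statement from Theorem~\ref{th:A} by writing $V_\infty=\bigcup_{d\ge1}\k^d$, so that $U(\mathfrak n_2(V_\infty))=\bigcup_d U(\mathfrak n_2(\k^d))$, and similarly for $S(V_\infty\oplus\Lambda^2(V_\infty))$. The proposed correspondence is again $\calF\mapsto\bigoplus_{\lambda\in\calF}\widehat{\calE}_\lambda$ (respectively $\bigoplus_{\lambda\in\calF}\calE_\lambda$), where $\widehat{\calE}_\lambda$ now denotes the isotypic component of $S^\lambda(V_\infty)$. By Klyachko's decomposition applied to $V_\infty$, each such component appears with multiplicity one, and every polynomial $GL(\infty,\k)$-stable subspace is a direct sum of such components. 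The polynomial-representation theory of $GL(\infty,\k)$ is just the direct limit of that of the $GL_d$, and highest weight vectors of weight $\lambda$ already live in the $\k^d$-part for $d\ge\ell(\lambda)$.

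Given a $GL(\infty,\k)$-stable ideal $J$, I would intersect it with $U(\mathfrak n_2(\k^d))$ for each $d$. The result is a $GL_d$-stable ideal, because $GL_d\subset GL(\infty,\k)$ preserves the subalgebra. By Theorem~\ref{th:A}, it therefore equals $\bigoplus_{\lambda\in\calF_d}\widehat{\calE}_\lambda(\k^d)$ for a filter $\calF_d\subseteq\Pi_d$. These filters are compatible, in the sense that $\calF_{d+1}\cap\Pi_d=\calF_d$. The reason is that $\hat w_\lambda$ for $\ell(\lambda)\le d$ is the same element in both algebras; it is the symmetrized Pfaffian product, which only involves $e_1,\dots,e_{\ell(\lambda)}$. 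Taking $\calF=\bigcup_d\calF_d$ gives a filter of Young's lattice: if $\lambda\in\calF$ and $\lambda\subseteq\mu$, choose $d\ge\ell(\mu)$ and use that $\calF_d$ is a filter. Since $J$ is the union of its intersections, $J=\bigoplus_{\lambda\in\calF}\widehat{\calE}_\lambda$.

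Conversely, for a filter $\calF$ I would set $J=\bigoplus_{\lambda\in\calF}\widehat{\calE}_\lambda$. This is $GL(\infty,\k)$-stable. It is an ideal because multiplying two elements involves only finitely many basis vectors, so the product happens inside some $U(\mathfrak n_2(\k^d))$. There $J\cap U(\mathfrak n_2(\k^d))$ is the ideal attached by Theorem~\ref{th:A} to the filter $\calF\cap\Pi_d$. The two maps are mutually inverse because distinct isotypic components are independent. The Poisson case is verbatim the same, using the Poisson part of Theorem~\ref{th:A}.

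The only point needing care is the bookkeeping that the isotypic component of $S^\lambda(V_\infty)$, intersected with the $\k^d$-part, is exactly the $GL_d$-isotypic component of $S^\lambda(\k^d)$. This holds by functoriality of Klyachko's isomorphism under the inclusions $\k^d\subset\k^{d+1}$, which identify $S^\lambda(\k^d)$ with the polynomial-weight part of $S^\lambda(\k^{d+1})$ supported on the first $d$ coordinates. I expect no real obstacle beyond stating this compatibility.
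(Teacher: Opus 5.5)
Your proof is correct. It rests on the same observation as the paper's proof, which says only that the proof of Theorem~\ref{th:A} goes through verbatim for $V_\infty$, because every highest weight vector already lies in the subalgebra generated by $x_1,\ldots,x_d$ for some $d$.

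The difference is organisational. The paper reruns the argument (Proposition~\ref{prop:Poisson}, Corollary~\ref{cor:Assoc}, Pieri's rule) directly for $GL(\infty,\k)$. You instead use Theorem~\ref{th:A} only as a black box: you apply it to the intersections $J\cap U(\mathfrak n_2(\k^d))$, check compatibility of the filters $\calF_d$ through the vectors $\hat w_\lambda\in U(\mathfrak n_2(\k^{\ell(\lambda)}))$, and glue.

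Your version is more modular, and it spells out the compatibility $\widehat{\calE}_\lambda(V_\infty)\cap U(\mathfrak n_2(\k^d))=\widehat{\calE}_\lambda(\k^d)$ that the paper leaves implicit. The cleanest justification for that compatibility is that $U(\mathfrak n_2(\k^d))$ is exactly the sum of torus weight spaces supported on the first $d$ coordinates. The paper's version is shorter, but leaves the reader to check that each ingredient of the finite-dimensional proof survives the passage to $V_\infty$.
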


\begin{proof}
The argument is completely identical to that of Theorem \ref{th:A}: every highest weight vector is contained in a subalgebra generated by $x_1,\ldots,x_d$ for some~$d$, so the same proof applies.
\end{proof}

\section{Weak identities for Weyl and Heisenberg algebras}\label{sec:pairs}

All algebras in this section are unital associative algebras over~$\k$. As above, we denote the ideal of weak identities of a pair $(\sfA,\sfV)$ in the free algebra $\k\langle X\rangle$ with a countable set of generators by $\operatorname{Id}(\sfA,\sfV)$. For a set of elements $\calF$ of $\k\langle X\rangle$, we use the notation $\langle\calF\rangle_{\L}$ for the $\L$-ideal generated by $\calF$, that is, the ideal generated by all substitutions of linear combinations of generators in the elements of $\calF$. Throughout this section, sans-serif letters $\sfx_i,\sfy_i$ denote elements of concrete algebras, whereas italic letters $x_i$ are generators of the free associative algebra in which the weak polynomial identities are considered.

Let us recall the basics concerning the Weyl and the Heisenberg algebras, referring the reader to \cite[Sec.~4.6]{MR498740} for details. Let $n\ge1$. The $n$-th Weyl algebra $\sfA_n$ is generated by
$\sfx_1,\sfy_1,\ldots,\sfx_n,\sfy_n$ with relations
\[
[\sfy_i,\sfx_j]=\delta_{ij}1,\qquad
[\sfy_i,\sfy_j]=[\sfx_i,\sfx_j]=0
\quad(1\le i,j\le n),
\]
where $\delta_{ij}$ is the Kronecker delta. It is known that the elements $\sfx_1^{a_1}\cdots \sfx_n^{a_n}\sfy_1^{b_1}\cdots \sfy_n^{b_n}$ with $a_i,b_i\ge 0$ form a basis of $\sfA_n$. We shall denote 
\[
\sfV_n=\k\{\sfx_1,\sfy_1,\ldots,\sfx_n,\sfy_n\}\subset\sfA_n.
\]
The $(2n+1)$-dimensional Heisenberg Lie algebra $\LieH_{2n+1}$ has a basis
$\sfx_1,\sfy_1,\ldots,\sfx_n$, $\sfy_n$, $\sfz$ and the Lie brackets
\[
[\sfy_i,\sfx_j]=\delta_{ij}\sfz,\qquad
[\sfy_i,\sfy_j]=[\sfx_i,\sfx_j]=[\sfy_i,\sfz]=[\sfx_i,\sfz]=0.
\]
We shall identify $\LieH_{2n+1}$ with its image in $U(\LieH_{2n+1})$.\\

Our goal in this section is to prove the following theorem.

\begin{theorem}\label{th:B}
For each $n\ge 1$, 
\begin{enumerate}
\item the $\mathsf{L}$-ideal of weak polynomial identities of the pair $(\sfA_n,\sfV_n)$ is generated by the elements $[[x_1,x_2],x_3]$ and $\st_{2n+1}$, 
\item the $\mathsf{L}$-ideal of weak polynomial identities of the pair $(U(\LieH_{2n+1}),\LieH_{2n+1})$ is generated by the elements $[[x_1,x_2],x_3]$ and $\st_{2n+2}$.    
\end{enumerate}
\end{theorem}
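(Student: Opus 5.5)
The plan is to reduce both statements to Corollary~\ref{cor:twisted-invariant-id} and then identify the relevant filter. Write $T$ for the $\L$-ideal $\langle[[x_1,x_2],x_3]\rangle_{\L}$. Then $\k\langle X\rangle/T\cong U(\mathfrak n_2(V_\infty))$.

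\emph{Step 1: $T\subseteq\operatorname{Id}$.} In $\sfA_n$, the commutator of two elements of $\sfV_n$ is a scalar. In $U(\LieH_{2n+1})$, the commutator of two elements of $\LieH_{2n+1}$ is a multiple of the central element $\sfz$. In both cases $[[x_1,x_2],x_3]$ is a weak identity, so $T\subseteq\operatorname{Id}$.

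\emph{Step 2: reduction to a filter.} By Proposition~\ref{prop:equiv-weak-twist}, $\operatorname{Id}/T$ is a $GL(\infty,\k)$-stable ideal of $U(\mathfrak n_2(V_\infty))$. By Corollary~\ref{cor:twisted-invariant-id}, it has the form $\bigoplus_{\lambda\in\calF}\widehat\calE_\lambda$ for a unique filter $\calF$.

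\emph{Step 3: choice of highest weight vectors.} For a one-column partition, take $\st_l(x_1,\ldots,x_l)$. It is multilinear and alternating in $x_1,\ldots,x_l$, so it is annihilated by the raising operators and has weight $(1^l)$. For general $\lambda$, take the product $h_\lambda=\prod_j \st_{\lambda'_j}(x_1,\ldots,x_{\lambda'_j})$. This is again annihilated by the raising operators and has weight $\lambda$. By multiplicity-freeness it spans the highest weight line of $\widehat\calE_\lambda$ as soon as it is nonzero in $U(\mathfrak n_2(V_\infty))$.

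\emph{Step 4: identification of $\calF$.} Since $\widehat\calE_\lambda$ is generated by its highest weight vector, $\lambda\in\calF$ exactly when $h_\lambda$ is a weak identity.
\begin{enumerate}
\item Upper bound: $\st_l$ is alternating and multilinear, so it vanishes whenever $l>\dim\sfV$. Hence $(1^{2n+1})\in\calF$ for the Weyl pair and $(1^{2n+2})\in\calF$ for the Heisenberg pair. Therefore $\calF$ contains all $\lambda$ with $\ell(\lambda)\ge 2n+1$ (respectively $\ge 2n+2$).
\item Lower bound (the key step): for every $\lambda$ with $\ell(\lambda)\le 2n$ (respectively $\le 2n+1$), exhibit one evaluation of $h_\lambda$ that is nonzero. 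This also shows $h_\lambda\neq0$ in $U(\mathfrak n_2(V_\infty))$.
\end{enumerate}
Given both bounds, $\calF$ is the principal filter generated by $(1^{2n+1})$ (respectively $(1^{2n+2})$). The ideal corresponding to that principal filter is the $GL$-stable ideal generated by the corresponding $\st$. Adding back $T$ yields exactly $\langle [[x_1,x_2],x_3],\st_{2n+1}\rangle_{\L}$ (respectively with $\st_{2n+2}$).

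\emph{Step 5: the nonvanishing evaluations.} Substitute $x_1,x_2,x_3,\ldots\mapsto \sfx_1,\sfy_1,\sfx_2,\sfy_2,\ldots$, and in the Heisenberg case send $x_{2n+1}\mapsto\sfz$. To see that the product is nonzero, pass to an associated graded algebra that is a polynomial domain: the Bernstein filtration for $\sfA_n$, and the PBW filtration for $U(\LieH_{2n+1})$. Modulo triple commutators, $\st_{2m}$ is a nonzero constant multiple of $\sigma$ applied to the Pfaffian of the commutator matrix, and $\st_{2m+1}$ is a constant multiple of $\sum_i(-1)^{i-1}x_i\cdot(\text{Pfaffian minor})$. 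These are exactly the expressions $w_{1^l}$ from \eqref{eq:Pfaff}, so the constants come from the symmetrization. Under our substitution the commutator matrix is block-diagonal with blocks $\pm1$ (respectively $\pm\sfz$). Hence:
\begin{enumerate}
\item $\st_{2m}$ evaluates to a nonzero scalar (respectively a nonzero multiple of $\sfz^m$);
\item $\st_{2m+1}$ has leading symbol a nonzero multiple of $\sfx_{m+1}$ (respectively $\sfx_{m+1}\sfz^m$);
\item $\st_{2n+1}$ in the Heisenberg case, where $x_{2n+1}\mapsto\sfz$, only the term with $\sfz$ outside the Pfaffian survives, giving a nonzero multiple of $\sfz^{n+1}$.
\end{enumerate}
The symbol of $h_\lambda$ is the product of these nonzero symbols in a domain, hence nonzero.

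I expect the main obstacle to be this last step: rigorously computing $\st_l$ in $U(\mathfrak n_2(V))$ as a symmetrized Pfaffian expression, including checking that the constant $m!$-type factors are nonzero, and verifying that the evaluated leading symbols are indeed nonzero.
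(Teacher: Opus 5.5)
Your proposal is correct and follows essentially the same route as the paper: reduce via Proposition~\ref{prop:equiv-weak-twist} and Corollary~\ref{cor:twisted-invariant-id} to identifying a filter, use the products of standard polynomials as highest weight vectors, get the upper bound from alternation on a space of dimension $2n$ (resp.\ $2n+1$), and get the lower bound by evaluating on consecutive pairs of generators, with $\sfz$ in the top odd column in the Heisenberg case. The only difference is cosmetic: the paper evaluates exactly, via the commutator expansion \eqref{eq:pairs_even_expansion} and the insertion formula \eqref{eq:pairs_odd_insertion}, which give $j!$ and $j!\,\sfy_{j+1}$ (times $\sfz^j$ in the Heisenberg case), so the results are already nonzero PBW monomials in commuting elements and the Pfaffian/symmetrization bookkeeping and associated-graded symbol argument you expected to be the main obstacle are not needed.
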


We shall deduce it from the classification of $GL(\infty,\k)$-stable ideals in the universal enveloping algebra $U(\mathfrak{n}_2(V_\infty))$ given in Corollary \ref{cor:twisted-invariant-id}.   

\subsection{The highest weight vectors}

In this section, it will be convenient for us to choose different highest weight vectors that are better adapted for studying weak identities. The vector $\hat w_\lambda$ introduced before Corollary~\ref{cor:Assoc} was specified only up to a nonzero scalar.
For the rest of this section, we choose its normalization as follows.

\begin{lemma}\label{lem:pairs_highest}
For $\dim(V)=d$ and $\lambda\in\Pi_d$, the product
\begin{equation}\label{eq:pairs_highest}
\hat w_\lambda:=\prod_{j=1}^{\lambda_1}
 \st_{\lambda'_j}(x_1,\ldots,x_{\lambda'_j}),
\end{equation}
where for $\lambda=\varnothing$, the product is understood to be $1$, is a nonzero highest weight vector of the isotypic component~$\widehat{\calE}_\lambda$ in $U(\mathfrak{n}_2(V))$.
\end{lemma}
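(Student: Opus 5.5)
Here $\st_{l}(x_1,\ldots,x_l)$ is read in $U(\mathfrak n_2(V))$, with $x_i=e_i$. The argument has three parts: a weight count, invariance under the upper unitriangular group, and nonvanishing.

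\emph{Weight.} The product \eqref{eq:pairs_highest} is multihomogeneous. Each factor $\st_{\lambda'_j}(x_1,\ldots,x_{\lambda'_j})$ has degree one in each of $x_1,\ldots,x_{\lambda'_j}$. The diagonal torus therefore acts on it by $t_1\cdots t_{\lambda'_j}$, and on the whole product by $t^\lambda$.

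\emph{Highest weight.} It suffices to check that each factor is annihilated by the raising operators $E_{ab}$, $a<b$. These operators act as derivations of $U(\mathfrak n_2(V))$, since $GL(V)$ acts by algebra automorphisms. On $\st_l(x_1,\ldots,x_l)$ with $b\le l$, the operator $E_{ab}$ replaces $x_b$ by $x_a$. The result is a standard polynomial with a repeated argument, which is zero by alternation. If $b>l$, the operator acts by zero. Equivalently, $\st_l$ is multilinear and alternating, so $g\cdot\st_l(x_1,\ldots,x_l)=\det(g|_{\k^l})\,\st_l(x_1,\ldots,x_l)$ for upper triangular $g$. The product of such eigenvectors is again an eigenvector of the Borel subgroup. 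Hence, if the product is nonzero, it is a highest weight vector of weight $\lambda$. By multiplicity-freeness (the Klyachko isomorphism recalled in Section~\ref{subsec:recollections}), it then lies in $\widehat{\calE}_\lambda$.

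\emph{Nonvanishing.} This is the main step, and I would prove it with the PBW filtration. In $U(\mathfrak n_2(V))$, any two elements of $\mathfrak n_2(V)$ have central commutator. Write
\[
\st_l=\sum_{\sigma}(-1)^\sigma x_{\sigma(1)}\cdots x_{\sigma(l)}.
\]
Reordering the monomials is a sum over pairings. The top-degree symbol of $\st_l$ in $\operatorname{gr}U=S(\mathfrak n_2(V))$, in PBW degree $\lceil l/2\rceil$, is a nonzero multiple of the Pfaffian expression $w_{1^l}$ of \eqref{eq:Pfaff}. Concretely, antisymmetrizing $x_1\cdots x_l$ with $l=2m$ gives, up to lower filtration terms, $\frac{l!}{2^m m!}\cdot 2^m\,\pf([x_r,x_s])$. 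For odd $l$ one gets the analogous expression with one $x_i$. The cleanest way to see this is to note that $\st_l$ equals the antisymmetrized product of $x_1,\ldots,x_l$ in the tensor algebra. Modulo triple commutators, $\st_{2m}$ equals $\frac{1}{2^m}\cdot\frac{(2m)!}{m!}$ times the symmetrized product of the commutators $[x_r,x_s]$ paired by a Pfaffian. Checking this constant, or even just that it is nonzero, is the expected obstacle. I would first try induction on $l$ via the expansion $\st_l=\sum_i(-1)^{i-1}x_i\,\st_{l-1}(\ldots\widehat{x_i}\ldots)$ for odd $l$, together with the analogous relation $\st_{2m}=\tfrac12\sum(\pm)[x_i,x_j]\st_{2m-2}+\cdots$.

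Granting this, the symbol of the product is $c\,w_\lambda$ with $c\neq0$. Indeed, symbols multiply in $\operatorname{gr}U$, and $S(\mathfrak n_2(V))$ is a domain in which $w_\lambda\ne0$. Therefore $\hat w_\lambda\ne0$.
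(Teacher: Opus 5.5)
Your weight count and Borel-invariance argument are fine. Your overall architecture matches the paper's: each factor is a highest weight vector, and the product is nonzero because $\operatorname{gr}U(\mathfrak n_2(V))=S(\mathfrak n_2(V))$ is a domain. The gap is in the only non-formal step. You never prove that $\st_l(x_1,\ldots,x_l)\neq0$ in $U(\mathfrak n_2(V))$. You assert that its symbol is a nonzero multiple of $w_{1^l}$, call checking this ``the expected obstacle'', and then proceed by ``granting this''. The constants you write down are also wrong. For $l=2$ your factor $\frac{l!}{2^m m!}\cdot 2^m$ gives $2$, whereas $\st_2(x_1,x_2)=[x_1,x_2]=\pf([x_r,x_s])$ exactly. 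As written, therefore, the nonvanishing, which is the real content of the lemma, rests on an unverified claim.

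The claim is true and quick to establish. In any associative algebra, expanding the commutators gives
\[
\st_{2m}(a_1,\ldots,a_{2m})=\frac{1}{2^m}\sum_{\tau\in S_{2m}}(-1)^\tau[a_{\tau(1)},a_{\tau(2)}]\cdots[a_{\tau(2m-1)},a_{\tau(2m)}].
\]
This is the expansion the paper uses in the proof of Theorem~\ref{th:B}. In $U(\mathfrak n_2(V))$ the elements $[x_r,x_s]$ are central, hence pairwise commuting. So the sum over $\tau$ equals $2^m m!\,\pf([x_r,x_s])$, and $\st_{2m}(x_1,\ldots,x_{2m})=m!\,\pf([x_r,x_s])_{1\le r,s\le 2m}$ holds exactly, not merely modulo lower filtration terms. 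The first-letter expansion then gives $\st_{2m+1}(x_1,\ldots,x_{2m+1})=m!\sum_i(-1)^{i-1}x_i\,\pf([x_r,x_s])_{r,s\ne i}$. Hence $\st_l(x_1,\ldots,x_l)$ has PBW filtration degree $\lceil l/2\rceil$, with symbol $\lfloor l/2\rfloor!\,w_{1^l}\neq0$ under $[x_r,x_s]\leftrightarrow y_{rs}$. With this, your argument goes through.

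Completed this way, your route gives more than the lemma: $\operatorname{gr}\hat w_\lambda=\prod_j\lfloor\lambda'_j/2\rfloor!\,w_\lambda$, which is the exact scalar relating this normalization to $\sigma(w_\lambda)$. The paper avoids all Pfaffian bookkeeping. It maps $U(\mathfrak n_2(V))\to\Lambda(V)$ by $v\mapsto v$, which is well defined because $[v,w]=2v\wedge w$ is central in $\Lambda(V)$. Under this map $\st_l(e_1,\ldots,e_l)\mapsto l!\,e_1\wedge\cdots\wedge e_l\neq0$, and the paper then needs only that $U(\mathfrak n_2(V))$ is a domain.
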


\begin{proof}
For $1\le k\le d$, alternation defines a $GL(V)$-equivariant map
\[
\Lambda^k(V)\longrightarrow U(\mathfrak{n}_2(V)),\qquad
v_1\wedge\cdots\wedge v_k\longmapsto\st_k(v_1,\ldots,v_k).
\]
Its value on $e_1\wedge\cdots\wedge e_k$ is nonzero. To see this, we map $U(\mathfrak{n}_2(V))$ to the exterior algebra $\Lambda(V)$ by sending every generator $v$ of $U(\mathfrak{n}_2(V))$ to the same element of the exterior algebra. This morphism of algebras is well defined because each element $[v,w]=2v\wedge w$ is central in $\Lambda(V)$. The image under this map of the standard
polynomial $\st_k$ is $k! e_1\wedge\cdots\wedge e_k\ne0$. This shows that each individual factor in Formula~\eqref{eq:pairs_highest} is a highest weight vector
of weight $(1^{\lambda'_j})$.

The algebra $U(\mathfrak{n}_2(V))$ has no zero divisors: its associated graded algebra for the PBW filtration is the polynomial algebra
$S(\mathfrak n_2(V))$. Hence the product in~\eqref{eq:pairs_highest} is nonzero. Since $GL(V)$ acts by algebra automorphisms, it is a
highest weight vector whose weight is the sum of the column weights, namely $\lambda$. Since the $GL(V)$-action on $U(\mathfrak{n}_2(V))$ is multiplicity-free, this highest weight vector is the highest weight vector of the only copy of $S^\lambda(V)$ in $U(\mathfrak{n}_2(V))$, that is, of $\widehat{\calE}_\lambda$.
\end{proof}

\subsection{The Weyl pair \texorpdfstring{$(\sfA_n,\sfV_n)$}{(An,Vn)}}

\begin{proof}[Proof of Theorem~\ref{th:B}(1)]
Write $\calI_n=\operatorname{Id}(\sfA_n,\sfV_n)$. It is clear that $[[x_1,x_2],x_3]$ and $\st_{2n+1}$ are weak identities of the pair $(\sfA_n,\sfV_n)$: for the first one, it follows from the fact that every commutator of linear combinations of generators is proportional to $1$, for the second one, one merely needs to note that we evaluate an alternating and multilinear operation $\st_{2n+1}$ on elements of a vector space of dimension~$2n$. This allows us to use Theorem \ref{th:A} and its Corollary \ref{cor:twisted-invariant-id}, converting information about $GL(\infty,\k)$-stable ideals of $U(\mathfrak{n}_2(V_\infty))$ containing $\st_{2n+1}$ into information about $\L$-ideals of free associative algebra containing $[[x_1,x_2],x_3]$ and $\st_{2n+1}$. Indeed, it follows from Corollary \ref{cor:twisted-invariant-id} that $\calI_n$ contains all isotypic components $\widehat{\calE}_\lambda$ where $\lambda'$ contains at least one part strictly greater than $2n$. Let us show that no other isotypic component belongs to $\calI_n$. For that, we shall exhibit a non-zero evaluation on $\sfV_n$ for every remaining highest weight vector \eqref{eq:pairs_highest} with $\lambda_1'\le 2n$.

Let us first note that for $1\le j\le n$,
\begin{equation}\label{eq:pairs_weyl_even}
\st_{2j}(\sfy_1,\sfx_1,\ldots,\sfy_j,\sfx_j)=j!\,1.
\end{equation}
Indeed, for arbitrary elements of an associative algebra,
\begin{equation}\label{eq:pairs_even_expansion}
\st_{2j}(a_1,\ldots,a_{2j})
 =\frac1{2^j}\sum_{\tau\in S_{2j}}(-1)^\tau
   [a_{\tau(1)},a_{\tau(2)}]\cdots
   [a_{\tau(2j-1)},a_{\tau(2j)}],
\end{equation}
since expanding the commutators produces $2^j$ copies of the standard polynomial. For the $2j$-tuple~\eqref{eq:pairs_weyl_even}, only the
$2^j j!$ terms pairing each $\sfy_i$ with $\sfx_i$ survive. Reversing a pair changes both its commutator and its permutation sign; permuting the pairs has positive sign. Every surviving signed product is $1$, which proves~\eqref{eq:pairs_weyl_even}.

To proceed, we remark that, if $b$ commutes with $a_1,\ldots,a_{2j}$, grouping the terms according to the position of $b$ yields
\begin{equation}\label{eq:pairs_odd_insertion}
\st_{2j+1}(a_1,\ldots,a_{2j},b)
 =\left(\sum_{i=0}^{2j}(-1)^i\right)
   \st_{2j}(a_1,\ldots,a_{2j})b
 =\st_{2j}(a_1,\ldots,a_{2j})b.
\end{equation}
Consequently, for $0\le j<n$,
\begin{equation}\label{eq:pairs_weyl_odd}
\st_{2j+1}(\sfy_1,\sfx_1,\ldots,\sfy_j,\sfx_j,\sfy_{j+1})=j!\sfy_{j+1}.
\end{equation}

Consider now a highest weight vector \eqref{eq:pairs_highest} with $\lambda_1'\le 2n$. We evaluate that noncommutative polynomial by 
assigning, for all $i\le\lambda_1'$,
\[
x_i\mapsto 
\begin{cases}
\sfy_{r}, \quad i=2r-1,\\
\sfx_{r}, \quad i=2r. 
\end{cases}
\]
Equations
\eqref{eq:pairs_weyl_even} and~\eqref{eq:pairs_weyl_odd} show that
\begin{equation}\label{eq:pairs_weyl_highest}
\hat w_\lambda\longmapsto
 \left(\prod_{s=1}^{\lambda_1}
       \left\lfloor\frac{\lambda'_s}{2}\right\rfloor!\right)
 \prod_{j=0}^{n-1}\sfy_{j+1}^{\,a_j},
\qquad
 a_j=\#\{s:\lambda'_s=2j+1\}.
\end{equation}
This is a non-zero element of $\sfA_n$, as required.
\end{proof}

\subsection{The Heisenberg pair
\texorpdfstring{$(U(\LieH_{2n+1}),\LieH_{2n+1})$}{(U(h2n+1),h2n+1)}}

\begin{proof}[Proof of Theorem~\ref{th:B}(2)]
Put $\calJ_n=\operatorname{Id}(U(\LieH_{2n+1}),\LieH_{2n+1})$. Since $[\LieH_{2n+1},\LieH_{2n+1}]=\k \sfz$ is central,
$[[x_1,x_2],x_3]$ is a weak identity; since we evaluate an alternating and multilinear operation $\st_{2n+2}$ on elements of a vector space $\LieH_{2n+1}$ of dimension~$2n+1$, $\st_{2n+2}$ is also a weak identity. It follows from Corollary \ref{cor:twisted-invariant-id} that $\calJ_n$ contains all isotypic components $\widehat{\calE}_\lambda$ where $\lambda'$ contains at least one part strictly greater than $2n+1$.

The same commutator expansion~\eqref{eq:pairs_even_expansion}, now
with $[\sfy_i,\sfx_i]=\sfz$, gives
\[
\st_{2j}(\sfy_1,\sfx_1,\ldots,\sfy_j,\sfx_j)=j!\sfz^j\ne0
\]
for $1\le j\le n$. Using~\eqref{eq:pairs_odd_insertion}, we also obtain
\[
\st_{2j+1}(\sfy_1,\sfx_1,\ldots,\sfy_j,\sfx_j,\sfy_{j+1})
 =j!\sfz^j\sfy_{j+1}\ne0
\]
for $0\le j<n$. Finally, since $\sfz$ is central, we have
\begin{equation}\label{eq:pairs_heisenberg_top}
\st_{2n+1}(\sfy_1,\sfx_1,\ldots,\sfy_n,\sfx_n,\sfz)=n!\sfz^{n+1}\ne0.
\end{equation}

Consider now a highest weight vector \eqref{eq:pairs_highest} with $\lambda_1'\le 2n+1$. We evaluate that noncommutative polynomial by 
assigning, for all $i\le\lambda_1'$,
\[
x_i\mapsto 
\begin{cases}
\sfy_{r}, \quad i=2r-1, \quad 1\le r\le n,\\
\sfx_{r}, \quad i=2r, \!\!\quad \qquad 1\le r\le n,\\
\sfz, \quad\,\,\,\,  i=2n+1.
\end{cases}
\]
The preceding evaluations show that
\begin{equation}\label{eq:pairs_heisenberg_highest}
\hat w_\lambda\longmapsto
 \left(\prod_{s=1}^{\lambda_1}
       \left\lfloor\frac{\lambda'_s}{2}\right\rfloor!\right)
 \sfz^c\prod_{j=0}^{n-1}\sfy_{j+1}^{\,a_j},
\end{equation}
where
\[
a_j=\#\{s:\lambda'_s=2j+1\}\quad(0\le j<n),\qquad
b=\#\{s:\lambda'_s=2n+1\},\qquad
c=\sum_{s=1}^{\lambda_1}
       \left\lfloor\frac{\lambda'_s}{2}\right\rfloor+b.
\]
These are non-zero elements of $U(\LieH_{2n+1})$, as required.
\end{proof}

\subsection{Weak cocharacters and codimensions}
To describe the weak cocharacters and codimensions of the pairs $(\sfA_n,\sfV_n)$ and $(U(\LieH_{2n+1}),\LieH_{2n+1})$, we use the standard notation $\chi^\lambda$ and $d_\lambda=\chi^\lambda(1)$ for the respective character and dimension of the simple $S_m$-module $M_\lambda$ corresponding to the Young diagram $\lambda$ with $|\lambda|=m$, see \cite[Sec.~I.7]{MR1354144}.

\begin{corollary}\label{cor:pairs_codimensions}
Let $n,m\ge1$. We have
\begin{gather}
 \chi_m(\sfA_n,\sfV_n)
   =\sum_{\substack{\lambda\vdash m\\\ell(\lambda)\le2n}}
             \chi^\lambda,\qquad 
 c_m(\sfA_n,\sfV_n)
   =\sum_{\substack{\lambda\vdash m\\\ell(\lambda)\le2n}}d_\lambda,\\
 \chi_m(U(\LieH_{2n+1}),\LieH_{2n+1})
   =\sum_{\substack{\lambda\vdash m\\\ell(\lambda)\le2n+1}}
             \chi^\lambda,\qquad 
 c_m(U(\LieH_{2n+1}),\LieH_{2n+1})
   =\sum_{\substack{\lambda\vdash m\\\ell(\lambda)\le2n+1}}d_\lambda.
\end{gather}
\end{corollary}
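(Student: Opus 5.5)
The plan is to read off the $S_m$-structure of the multilinear part of the quotient $\k\langle X\rangle/\operatorname{Id}$ from the $GL(\infty,\k)$-decomposition established in the proofs of Theorem~\ref{th:B}. I treat the Weyl case; the Heisenberg case is identical with $2n$ replaced by $2n+1$.

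First, by the proof of Theorem~\ref{th:B}(1), $\calI_n$ contains $[[x_1,x_2],x_3]$. Hence the quotient $\k\langle X\rangle/\calI_n$ is a quotient of $U(\mathfrak n_2(V_\infty))$ by the $GL(\infty,\k)$-stable ideal $\calI_n/\langle[[x_1,x_2],x_3]\rangle_{\L}$. By Corollary~\ref{cor:twisted-invariant-id}, this ideal is $\bigoplus_{\lambda\in\calF}\widehat{\calE}_\lambda$ for the filter $\calF$ described in that proof: $\lambda\in\calF$ if and only if $\lambda'_1>2n$, i.e.\ $\ell(\lambda)>2n$. Since $U(\mathfrak n_2(V_\infty))\cong\bigoplus_{\lambda}S^\lambda(V_\infty)$ is multiplicity-free (Klyachko's isomorphism), we get an isomorphism of polynomial $GL(\infty,\k)$-modules
\[
\k\langle X\rangle/\calI_n\cong\bigoplus_{\ell(\lambda)\le 2n}S^\lambda(V_\infty).
\]

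Next, I pass to multilinear parts via Schur--Weyl duality, as in the third viewpoint of Section~\ref{sec:twisted}. The subspace of $S^\lambda(V_\infty)$ of weight $1^m$ is zero unless $|\lambda|=m$. When $|\lambda|=m$, it is the Specht module $M_\lambda$ as an $S_m$-module, using the convention $S^\lambda(V)=M_\lambda\otimes_{\k S_m}V^{\otimes m}$ fixed by the identification $A(V)=\bigoplus A_m\otimes_{\k S_m}V^{\otimes m}$. Taking the $1^m$ weight space of the displayed isomorphism therefore gives $\chi_m(\sfA_n,\sfV_n)=\sum_{\lambda\vdash m,\ \ell(\lambda)\le 2n}\chi^\lambda$. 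Evaluating at the identity then gives the codimension formula.

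The only delicate point is the convention matching. One must check that the $S_m$-action on multilinear polynomials (right action by permuting positions, or equivalently variables) corresponds to $M_\lambda$ rather than $M_{\lambda'}$ in $S^\lambda$; a transposed convention would produce $\lambda'$ and a sign twist. I would verify this on the smallest case: $\st_2=[x_1,x_2]$ spans the sign representation and is the highest weight vector of weight $(1,1)$ by Lemma~\ref{lem:pairs_highest}, consistent with $S^{(1^2)}=\Lambda^2$ corresponding to the sign character $\chi^{(1,1)}$. This fixes the convention consistently. The restriction $\ell(\lambda)\le 2n$ is automatic for $m\le 2n$, which serves as a sanity check.
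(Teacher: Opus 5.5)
Your proposal is correct and is essentially the paper's argument. The paper just cites Theorem~\ref{th:B} plus Schur--Weyl duality, and you make explicit the intermediate identification $\k\langle X\rangle/\calI_n\cong\bigoplus_{\ell(\lambda)\le 2n}S^\lambda(V_\infty)$ (via Corollary~\ref{cor:twisted-invariant-id} and multiplicity-freeness) before taking $1^m$-weight spaces. The one thing to tighten is your parenthetical about the $S_m$-action: on the quotient, only the action permuting variables (by permutation matrices in $GL(m,\k)$) is defined, because an $\L$-ideal need not be stable under permuting positions; that is also the action Schur--Weyl duality refers to, so your argument is unaffected.
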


\begin{proof}
This is an immediate consequence of Theorem~\ref{th:B} and Schur--Weyl duality \cite[Sec.~5.18]{MR2808160}: on the level of multilinear components, the simple $S_m$-modules that appear correspond precisely to the Young diagrams for which $S^\lambda(V)$ appears, with the same multiplicities.
\end{proof}

In particular, for $n=1$ the formulas specialize to
\begin{equation*}
 c_m(\sfA_1,\sfV_1)
   =\binom{m}{\lfloor m/2\rfloor},\qquad 
 c_m(U(\LieH_3),\LieH_3)
   =\sum_{j=0}^{\lfloor m/2\rfloor}
        \frac{1}{j+1}\binom{m}{2j}\binom{2j}{j}.
\end{equation*}
The first formula follows from $d_{(m-j,j)}=\binom{m}{j}-\binom{m}{j-1}$. Combining the identity of Regev \cite{Regev_1981} and the observation of Donaghey and Shapiro \cite{MR505544}, one recognizes the second number as the $m$th Motzkin number. 

\subsection{Another Weyl pair}

As another consequence of Theorem~\ref{th:B}(2), we describe weak
identities of the Weyl algebra relative to the generating subspace that includes the unit of the associative product. For every $n\ge1$, put
\[
\mathsf{H}_n=\k\{\sfx_1,\sfy_1,\ldots,\sfx_n,\sfy_n,1\}
\subseteq\sfA_n.
\]

\begin{corollary}\label{cor:weyl_heisenberg}
For every $n\ge1$,
\[
\operatorname{Id}(\sfA_n,\mathsf{H}_n)
 =\langle [[x_1,x_2],x_3],\st_{2n+2}\rangle_{\L}.
\]
\end{corollary}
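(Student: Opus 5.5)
The plan is to compare the pair $(\sfA_n,\mathsf{H}_n)$ with the Heisenberg pair $(U(\LieH_{2n+1}),\LieH_{2n+1})$ through a specialization map, and then to use Corollary~\ref{cor:twisted-invariant-id} to pin down the ideal exactly.

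First, the generators are weak identities. Commutators of elements of $\mathsf{H}_n$ are scalars, since $1$ commutes with everything, so $[[x_1,x_2],x_3]$ vanishes on $\mathsf{H}_n$. Moreover, $\st_{2n+2}$ is multilinear and alternating, and $\dim\mathsf{H}_n=2n+1$, so it vanishes there as well. Consequently $\operatorname{Id}(\sfA_n,\mathsf{H}_n)$ is an $\L$-ideal containing $[[x_1,x_2],x_3]$, and by Proposition~\ref{prop:equiv-weak-twist} its image in $U(\mathfrak n_2(V_\infty))$ is $GL(\infty,\k)$-stable. Corollary~\ref{cor:twisted-invariant-id} then identifies this image with a direct sum of the components $\widehat{\calE}_\lambda$ over a filter. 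This filter contains every $\lambda$ with $\lambda'_1>2n+1$, because $\st_{2n+2}$ generates $\widehat{\calE}_{(1^{2n+2})}$.

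It remains to show that no $\hat w_\lambda$ with $\lambda'_1\le 2n+1$ is a weak identity. For this I would use the surjective algebra homomorphism $\pi\colon U(\LieH_{2n+1})\to\sfA_n$ given by $\sfx_i\mapsto\sfx_i$, $\sfy_i\mapsto\sfy_i$, $\sfz\mapsto1$. It is well defined because $[\sfy_i,\sfx_j]=\delta_{ij}1$, and it maps $\LieH_{2n+1}$ onto $\mathsf{H}_n$. Applying $\pi$ to the evaluation \eqref{eq:pairs_heisenberg_highest} used in the proof of Theorem~\ref{th:B}(2), with $x_{2n+1}\mapsto 1$, gives
\[
\hat w_\lambda\longmapsto \Bigl(\prod_{s}\lfloor\lambda'_s/2\rfloor!\Bigr)\prod_{j=0}^{n-1}\sfy_{j+1}^{a_j}.
\]
This element is nonzero in $\sfA_n$ by the PBW-type basis. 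The formula remains valid because each identity used there, namely \eqref{eq:pairs_even_expansion}, \eqref{eq:pairs_odd_insertion} and \eqref{eq:pairs_heisenberg_top}, only requires that $\sfz$ be central and that $[\sfy_i,\sfx_i]=\sfz$, and both facts persist after specialization.

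Hence the filter is exactly $\{\lambda:\lambda'_1>2n+1\}$. This is the same filter that gives $\calJ_n$ in Theorem~\ref{th:B}(2), and that ideal is generated by $[[x_1,x_2],x_3]$ and $\st_{2n+2}$, which completes the proof. The one point I expect to need care is the passage from ``contains $[[x_1,x_2],x_3]$'' to working inside $U(\mathfrak n_2(V_\infty))=\k\langle X\rangle/\langle[[x_1,x_2],x_3]\rangle_{\L}$. I would handle it exactly as in the proof of Theorem~\ref{th:B}: the ideal is the preimage of its image, and equality of images gives equality of ideals.
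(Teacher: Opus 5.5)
Your argument is correct, but it takes a different route from the paper.

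\textbf{The paper's route.} The paper does not go back to the representation theory for this pair. It uses the whole family of surjections $\rho_t\colon U(\LieH_{2n+1})\to\sfA_n$, given by $\sfy_i\mapsto t\sfy_i$ and $\sfz\mapsto t1$, each mapping $\LieH_{2n+1}$ onto $\mathsf{H}_n$. It then uses a PBW/interpolation argument: a nonzero element of $U(\LieH_{2n+1})$ survives $\rho_t$ for all but finitely many $t$. This proves $\operatorname{Id}(\sfA_n,\mathsf{H}_n)=\operatorname{Id}(U(\LieH_{2n+1}),\LieH_{2n+1})$ outright, after which Theorem~\ref{th:B}(2) is simply quoted.

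\textbf{Your route.} A single specialization such as your $\pi=\rho_1$ cannot by itself give the inclusion $\operatorname{Id}(\sfA_n,\mathsf{H}_n)\subseteq\operatorname{Id}(U(\LieH_{2n+1}),\LieH_{2n+1})$, because $\pi$ kills $\sfz-1$. You sidestep this with the filter classification of Corollary~\ref{cor:twisted-invariant-id}. That reduces the question to nonvanishing of the explicit highest weight vectors $\hat w_\lambda$ with $\lambda'_1\le 2n+1$. Their Heisenberg evaluations \eqref{eq:pairs_heisenberg_highest} visibly stay nonzero under $\sfz\mapsto 1$. In effect, you re-run the proof of Theorem~\ref{th:B}(2) with $x_{2n+1}\mapsto 1$ in place of $\sfz$. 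Your extra remark that the identities \eqref{eq:pairs_even_expansion}, \eqref{eq:pairs_odd_insertion} survive specialization is unnecessary. It is enough that $\pi$ is an algebra homomorphism and $\pi\circ\alpha$ takes values in $\mathsf{H}_n$.

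\textbf{Comparison.} The paper's argument is softer: it shows the two pairs have the same weak identities by a deformation principle that does not use the classification at all. Yours avoids the one-parameter family and the interpolation step, but it leans again on Theorem~\ref{th:A} and on the explicit evaluations.
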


\begin{proof}
For each $t\in\k^\times$,
the assignments
\[
\rho_t(\sfx_i)=\sfx_i,\qquad
\rho_t(\sfy_i)=t\sfy_i,\qquad \rho_t(\sfz)=t1
\]
define a surjective algebra homomorphism $\rho_t\colon U(\LieH_{2n+1})\to\sfA_n$.
Its restriction maps $\LieH_{2n+1}$ bijectively onto $\mathsf{H}_n$.
Consequently,
\[
\operatorname{Id}(U(\LieH_{2n+1}),\LieH_{2n+1})
\subseteq\operatorname{Id}(\sfA_n,\mathsf{H}_n).
\]
Conversely, the PBW basis of $U(\LieH_{2n+1})$ shows that evaluating a non-zero element of that algebra at $\sfz=t$ cannot be equal to zero for all $t\ne 0$, since $\k$ is infinite. Thus, every polynomial which is a weak identity after every specialization $\rho_t$, $t\ne 0$, is already a weak identity of $U(\LieH_{2n+1})$, proving the reverse inclusion, so Theorem~\ref{th:B}(2) applies.
\end{proof}

\section{Open problems arising from our results}\label{sec:open}

There are several promising directions related to our work. Let us outline three of them.

\subsection{Relating other stable ideals in \texorpdfstring{$U(\mathfrak{n}_2(V))$}{Un2V} to weak identities}

In Theorem~\ref{th:B}, we identified, for each $n\ge 3$, the $\L$-ideal generated by $[[x_1,x_2],x_3]$ and $\st_n$ as the weak ideal of a very concrete pair $(\sfA,\sfV)$. Let us give another result of this kind.

\begin{proposition}
Consider the Grassmann algebra $\Lambda(V)$ generated by a vector space $V$ of countably infinite dimension. The $\L$-ideal of weak polynomial identities of the pair $(\Lambda(V),\Lambda(V))$ is generated by the elements $[[x_1,x_2],x_3]$ and $[x_1,x_2]^2$. It corresponds to the $GL(V)$-stable ideal of $U(\mathfrak{n}_2(V))$ generated by $\widehat{\calE}_{\lambda}$ with $\lambda=(2^2)$.
\end{proposition}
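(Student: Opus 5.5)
The plan follows the pattern of the proof of Theorem~\ref{th:B}. Write $\calI=\operatorname{Id}(\Lambda(V),\Lambda(V))$. First I check that $[[x_1,x_2],x_3]$ and $[x_1,x_2]^2$ are weak identities. Every element of $\Lambda(V)$ splits as $a=a_0+a_1$ into even and odd parts, and the even part is central. Hence $[a,b]=[a_1,b_1]=2a_1b_1$ is even, so it is central, which gives the first identity. For the second, write $a_1=\sum_k \alpha_k$ and $b_1=\sum_l\beta_l$ as sums of odd monomials. Then $a_1b_1=\sum_{k,l}\alpha_k\beta_l$ is a sum of even elements, and its square equals $\sum_{k,l,k',l'}\alpha_k\beta_l\alpha_{k'}\beta_{l'}$.

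The square does not vanish termwise, so I argue more cleanly as follows. The quantity $[a,b]^2=4(a_1b_1)(a_1b_1)$ equals $-4a_1a_1b_1b_1$, because $b_1$ anticommutes with $a_1$ and the even element $a_1b_1$ is central. Moreover $a_1^2=0$ for odd $a_1$ in a Grassmann algebra, since $a_1^2=\tfrac12[a_1,a_1]_+=0$. Hence $[x_1,x_2]^2$ is a weak identity.

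Next, by Proposition~\ref{prop:equiv-weak-twist}, $\calI$ is an $\L$-ideal. Since it contains the triple commutators, it descends to a $GL(\infty,\k)$-stable ideal of $U(\mathfrak n_2(V_\infty))$. By Corollary~\ref{cor:twisted-invariant-id}, this ideal is $\bigoplus_{\lambda\in\calF}\widehat{\calE}_\lambda$ for some filter $\calF$. I then identify $[x_1,x_2]^2$ with a highest weight vector of weight $(2^2)$. It is a degree-$4$ element whose multilinearization is $[x_1,x_2][x_3,x_4]+\ldots$ symmetrized over the two pairs. In the image it is a nonzero element of $S^2(\Lambda^2 V)\cong S^{(2,2)}\oplus S^{(1^4)}$, viewed in the top PBW degree. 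Because it is quadratic in $x_1,x_2$ only, only weights with at most two rows can occur, so it lies in $\widehat{\calE}_{(2,2)}$ modulo lower PBW terms. More directly, $\hat w_{(2,2)}=\st_2(x_1,x_2)^2=[x_1,x_2]^2$ by Lemma~\ref{lem:pairs_highest}. So $[x_1,x_2]^2$ is exactly $\hat w_{(2^2)}$, and the filter generated by $(2^2)$ lies in $\calF$. Thus the $\L$-ideal generated by the two elements corresponds to the filter $\{\mu:\mu\supseteq(2^2)\}$, that is, to the ideal generated by $\widehat{\calE}_{(2^2)}$, by Theorem~\ref{th:A}.

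The main step is showing that nothing else lies in $\calI$. I need to show that for every $\lambda\not\supseteq(2^2)$, that is, every hook $\lambda=(a,1^b)$, the vector $\hat w_\lambda$ has a nonzero evaluation on $\Lambda(V)$. By Lemma~\ref{lem:pairs_highest},
\[
\hat w_{(a,1^b)}=\st_{b+1}(x_1,\ldots,x_{b+1})\,x_1^{a-1}.
\]
I evaluate $x_1\mapsto 1+e_1$ and $x_i\mapsto e_i$ for $2\le i\le b+1$, using distinct odd generators $e_i$. Then $x_1^{a-1}\mapsto 1+(a-1)e_1$. The value of $\st_{b+1}$ is computed through the algebra map $U(\mathfrak n_2)\to\Lambda$ as in the proof of Lemma~\ref{lem:pairs_highest}, but that map sends generators to themselves, and here the evaluation is not of that form. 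So I compute directly. All the $e_i$ anticommute, and $1$ is central with $\st$ multilinear, so
\[
\st_{b+1}(1+e_1,e_2,\ldots)=\st_{b+1}(1,e_2,\ldots)+\st_{b+1}(e_1,\ldots,e_{b+1}).
\]
The second summand is $(b+1)!\,e_1\cdots e_{b+1}$. The first summand is $\st_b(e_2,\ldots)$ times an alternating sum, which by \eqref{eq:pairs_odd_insertion}-type reasoning equals $\st_b$ when $b$ is even and $0$ when $b$ is odd, giving $b!\,e_2\cdots e_{b+1}$ or $0$. In either case, multiplying by $1+(a-1)e_1$ leaves the nonzero term $(b+1)!\,e_1\cdots e_{b+1}$ plus possibly further terms not containing $e_1$. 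Hence the product is nonzero. I expect this sign bookkeeping to be the only delicate point.
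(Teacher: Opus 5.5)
Your proof is correct, but it takes a genuinely different route from the paper's. The paper does not use its own classification to decide which components survive. It notes that the weak identities of a pair $(A,A)$ are just the ordinary identities of $A$, and it imports two classical results: the theorem of Krakowski and Regev (the $\T$-ideal of $\Lambda(V)$ is generated by $[[x_1,x_2],x_3]$), and Latyshev's description of that $\T$-ideal as the two-sided ideal generated by all $[[x_i,x_j],x_k]$ and $[x_i,x_j][x_k,x_l]+[x_i,x_k][x_j,x_l]$. It then uses a multilinearization identity to trade the second generator for $[x_1,x_2]^2$, which is the highest weight vector of weight $(2^2)$.

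You instead run the template of the proof of Theorem~\ref{th:B}. You check directly that both elements are weak identities and use Corollary~\ref{cor:twisted-invariant-id} to reduce to a filter containing $(2^2)$. You then rule out every partition outside the principal filter (the hooks $(a,1^b)$ and $\varnothing$) by a nonvanishing evaluation of $\hat w_{(a,1^b)}=\st_{b+1}(x_1,\ldots,x_{b+1})x_1^{a-1}$ at $x_1\mapsto 1+e_1$, $x_i\mapsto e_i$. Your argument is self-contained modulo Theorem~\ref{th:A}, and it recovers the Krakowski--Regev--Latyshev description (in its $\L$-ideal form) instead of depending on it. The paper's argument is shorter and needs no evaluations, at the price of citing two classical theorems.

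There is a small slip in your last step. When $b$ is even and $a\ge2$, the cross term $b!\,e_2\cdots e_{b+1}\cdot(a-1)e_1=(a-1)\,b!\,e_1\cdots e_{b+1}$ does contain $e_1$. So the coefficient of $e_1\cdots e_{b+1}$ is $(b+1)!+(a-1)b!=b!(a+b)$, not $(b+1)!$. It is still nonzero, and the term $b!\,e_2\cdots e_{b+1}$ alone already shows the value is nonzero, so your conclusion stands.

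Two minor clean-ups:
\begin{itemize}
\item The PBW-degree discussion of $[x_1,x_2]^2$ can be dropped, since Lemma~\ref{lem:pairs_highest} gives $\hat w_{(2^2)}=[x_1,x_2]^2$ directly.
\item $a_1^2=0$ is best justified by noting that odd elements anticommute (check on monomials and extend bilinearly), so $a_1a_1=-a_1a_1$.
\end{itemize}
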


\begin{proof}
For any algebra $A$, the ideal of weak identities of the pair $(A,A)$ is the $\T$-ideal of all identities of $A$.  
Krakowski and Regev in \cite{MR325658} show that the $\T$-ideal of identities of $\Lambda(V)$ is generated by the single element $[[x_1,x_2],x_3]$. On the other hand, this latter $\T$-ideal was previously considered by Latyshev who established in \cite{MR156874} that as a two-sided ideal, this $\T$-ideal is generated by all elements $[[x_i,x_j],x_k]$ and $[x_i,x_j][x_k,x_l]+[x_i,x_k][x_j,x_l]$, for $i,j,k,l\ge 1$. In terms of weak polynomial identities, this means that as an $\L$-ideal, this ideal is generated by $[[x_1,x_2],x_3]$ and $[x_1,x_2][x_3,x_4]+[x_1,x_3][x_2,x_4]$. It remains to note that substitution $x_3=x_1$, $x_4=x_2$ into $[x_1,x_2][x_3,x_4]+[x_1,x_3][x_2,x_4]$ gives $[x_1,x_2]^2$, and that the multilinearization of the polynomial $[x_1,x_2]^2$, obtained by replacing $x_1$ by $x_1+x_4$ and $x_2$ by $x_2+x_3$, and extracting multilinear terms, is
\begin{multline*}
[x_1,x_2][x_4,x_3]+[x_1,x_3][x_4,x_2]+[x_4,x_2][x_1,x_3]+[x_4,x_3][x_1,x_2]\\ =-2([x_1,x_2][x_3,x_4]+[x_1,x_3][x_2,x_4])+[[x_1,x_2],[x_3,x_4]]+[[x_1,x_3],[x_2,x_4]], 
\end{multline*}
which shows that the $\L$-ideal generated by $[[x_1,x_2],x_3]$ and $[x_1,x_2][x_3,x_4]+[x_1,x_3][x_2,x_4]$ coincides with the $\L$-ideal generated by $[[x_1,x_2],x_3]$ and $[x_1,x_2]^2$. It remains to note that $[x_1,x_2]^2$ is a highest weight vector of highest weight $(2,2)$, and therefore generates the $GL(V)$-submodule $\widehat{\calE}_{\lambda}$ with $\lambda=(2^2)$. 
\end{proof}

It would be interesting to find meaningful pairs $(\sfA,\sfV)$ corresponding to other $GL(V)$-stable ideals of $U(\mathfrak{n}_2(V))$, for instance, to the $GL(V)$-stable ideal generated by $\widehat{\calE}_\lambda$ for a given Young diagram $\lambda$. (Of course, we saw in the proof of Proposition \ref{prop:equiv-weak-twist} that exhibiting one such pair in a tautological way is always possible, hence the qualifier ``meaningful''.)

\subsection{Products of isotypic components in \texorpdfstring{$U(\mathfrak{n}_2(V))$}{Un2V}}

Another natural question arising from our work is parallel to the following question about multiplicity-free actions in harmonic analysis and representation theory. Suppose that a complex reductive algebraic group $G$ acts on an algebraic variety $M$ in a way that the $G$-action on $\mathbb{C}[M]$ is multiplicity-free. Suppose that $V_\lambda$, $V_\mu$, and $V_\nu$ are three irreducible representations of $G$ that occur as constituents in $\mathbb{C}[M]$ (necessarily with multiplicity one), and that $\calS_\lambda$, $\calS_\mu$, and $\calS_\nu$ are the corresponding isotypic components of $\mathbb{C}[M]$. Denote by $\calS_\lambda \calS_\mu$ the vector subspace of $\mathbb{C}[M]$ spanned by all products of elements of $\calS_\lambda$ and $\calS_\mu$. It is reasonable to ask whether $\calS_\nu\subset \calS_\lambda \calS_\mu$ if and only if $V_\nu$ embeds in the tensor product $V_\lambda\otimes V_\mu$. This question has been studied in \cite{MR4562114,MR2039124,MR2504020}, where some interesting results were obtained.

In the case $G=GL(V)$ and $M=V^*\oplus\Lambda^2(V^*)$, so that $\mathbb{C}[M]=S(V\oplus\Lambda^2(V))$, results of Daszkiewicz \cite{MR1018957} show that the corresponding statement is not true already for $|\lambda|=1$. By contrast, our Corollary \ref{cor:Assoc}, paired with Remark \ref{rem:one-sided}, implies that for $\min(|\lambda|,|\mu|)=1$ the expected result is true. However, it is not true in full generality, as the following example shows.

\begin{proposition}
For $\dim(V)\ge 3$, we have $\widehat{\calE}_{2,1^2}\not\subset \widehat{\calE}_{1^2}\widehat{\calE}_{1^2}$ even though $S^{2,1^2}(V)$ is isomorphic to a submodule of $S^{1^2}(V)\otimes S^{1^2}(V)$.
\end{proposition}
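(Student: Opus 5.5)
The plan is to show that the product space $\widehat{\calE}_{1^2}\widehat{\calE}_{1^2}$ is too small: as a $GL(V)$-module it is a quotient of $S^2(\Lambda^2(V))$, which has no constituent of type $(2,1^2)$.

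First I would identify $\widehat{\calE}_{1^2}$ explicitly. The algebra $U(\mathfrak n_2(V))$ is graded by word length, since the relations $[[x_1,x_2],x_3]=0$ are homogeneous. Its degree-$2$ component is $V^{\otimes 2}\cong S^2(V)\oplus\Lambda^2(V)$. The unique copy of $S^{1^2}(V)=\Lambda^2(V)$ in it is the span of the commutators $[v,w]$, $v,w\in V$; this is consistent with Lemma~\ref{lem:pairs_highest}, since $\st_2(x_1,x_2)=[x_1,x_2]$. Hence $\widehat{\calE}_{1^2}=[V,V]$, which is exactly the image of $\Lambda^2(V)\subset\mathfrak n_2(V)$ in the enveloping algebra. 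The key structural fact is that these elements are central in $U(\mathfrak n_2(V))$, because $\mathfrak n_2(V)$ is two-step nilpotent. In particular they commute with each other.

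Next I would consider the multiplication map
\[
\widehat{\calE}_{1^2}\otimes\widehat{\calE}_{1^2}\to U(\mathfrak n_2(V)),\qquad a\otimes b\mapsto ab.
\]
This map is $GL(V)$-equivariant, and its image is $\widehat{\calE}_{1^2}\widehat{\calE}_{1^2}$. Since $ab=ba$ for $a,b\in\widehat{\calE}_{1^2}$, the map factors through the symmetric square. So $\widehat{\calE}_{1^2}\widehat{\calE}_{1^2}$ is a homomorphic image of $S^2(\Lambda^2(V))$. (In fact it is isomorphic to it, by PBW, but the quotient statement is all we need.)

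The plethysm $S^2(\Lambda^2 V)\cong S^{2,2}(V)\oplus S^{1^4}(V)$ is classical; it is the degree-$4$ part of the third Littlewood identity recalled in the introduction, a sum over partitions with even column multiplicities. Therefore the product space contains no constituent isomorphic to $S^{2,1^2}(V)$.

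It remains to check two facts for $\dim(V)\ge3$. First, $\widehat{\calE}_{2,1^2}\neq0$: this holds because $\ell((2,1^2))=3\le d$, so by the Klyachko decomposition recalled in Subsection~\ref{subsec:recollections} (or directly, since $\hat w_{2,1^2}=\st_3(x_1,x_2,x_3)\,x_1\neq0$ by Lemma~\ref{lem:pairs_highest}) the component is nonzero. Since the whole algebra is multiplicity-free, $\widehat{\calE}_{2,1^2}$ cannot lie inside a submodule with no $S^{2,1^2}$-constituent, which gives the non-inclusion. Second, $S^{1^2}\otimes S^{1^2}\cong S^{2,2}\oplus S^{2,1^2}\oplus S^{1^4}$ by the Pieri/Littlewood--Richardson rule, and the middle term survives when $d\ge3$.

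The only step needing care is the identification of $\widehat{\calE}_{1^2}$ with the span of commutators, together with the observation that commutativity forces the symmetric square. Everything after that is standard plethysm bookkeeping.
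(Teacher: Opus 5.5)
Your proposal is correct and follows the paper's proof: centrality of $\widehat{\calE}_{1^2}=\Lambda^2(V)$ makes the multiplication map factor through $S^2(\Lambda^2(V))\cong S^{2^2}(V)\oplus\Lambda^4(V)$, which has no $S^{2,1^2}(V)$ constituent, while Pieri's rule supplies the copy of $S^{2,1^2}(V)$ in $\Lambda^2(V)\otimes\Lambda^2(V)$ for $\dim(V)\ge 3$. One small wording slip: the relevant Littlewood identity is indexed by partitions with even column \emph{lengths} (equivalently, even multiplicities of parts), not ``even column multiplicities''; your resulting decomposition is nonetheless right.
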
 

\begin{proof}
The fact that $S^{2,1^2}(V)$ is isomorphic to a submodule of $S^{1^2}(V)\otimes S^{1^2}(V)$ follows from Pieri's formula \cite[Sec.~I.5]{MR1354144}. For the other claim, it is enough to note that $\widehat{\calE}_{1^2}=\Lambda^2(V)$ consists of central elements of $U(\mathfrak n_2(V))$, and therefore the multiplication map
 \[
\widehat{\calE}_{1^2}\otimes\widehat{\calE}_{1^2}\to U(\mathfrak n_2(V))   
 \]
factors through $S^2(\Lambda^2(V))\cong S^{2^2}(V)\oplus \Lambda^4(V)$, with no copies of $S^{2,1^2}(V)$; the latter module is non-zero precisely for $\dim(V)\ge 3$. 
\end{proof}

Recall that $S^\nu(V)$ is isomorphic to a submodule of $S^{\lambda}(V)\otimes S^\mu(V)$ if and only if $c_{\lambda,\mu}^\nu>0$, where $c_{\lambda,\mu}^\nu$ is the classical Littlewood--Richardson coefficient \cite[Sec.~I.9]{MR1354144}. It would be interesting to know which  conditions on $\lambda,\mu,\nu$ one needs to impose to ensure that $\widehat{\calE}_\nu\subset \widehat{\calE}_\lambda\widehat{\calE}_\mu$.

\subsection{Weak identities of the pair \texorpdfstring{$(\sfA_1,\sfW_1)$}{A1W1}}

The last conjecture we would like to advertise concerns a relationship of the standard identity $\st_3$ to a very particular subspace of the first Weyl algebra $\sfA_1$, the Witt Lie algebra $\sfW_1$. Classically, $\sfW_1$ is defined as the Lie algebra $\Der(\k[t])$ of polynomial vector fields on the affine line; it can be embedded into $\sfA_1$ as the linear span of all elements $\sfx_1^k\sfy_1$, $k\ge 0$. It was discovered by Sumenkov \cite{Sumenkov1976} that this Lie algebra satisfies the standard identity of degree five
 \[
\sum_{\sigma\in S_4}(-1)^\sigma
[[[[v_0,v_{\sigma(1)}],v_{\sigma(2)}],v_{\sigma(3)}],v_{\sigma(4)}]=0,   
 \]
and it is a famous open problem in the theory of polynomial identities in Lie algebras whether all identities of $\sfW_1$ follow from that identity, see \cite[Sec.~14.3]{MR4368864} for a discussion of this question.  We would like to propose a potentially more accessible conjecture in the context of weak identities, which we view as a promising first step in understanding Lie identities of $\sfW_1$. Namely, it is known that the associative standard polynomial $\st_3$ is an identity of the pair $(\sfA_1,\sfW_1)$, see, for instance, \cite[Cor.~5.5]{Dzhumadildaev_2004}. Computer experiments suggest the following conjecture.

\begin{conjecture}
The ideal of weak identities of the pair $(\sfA_1,\sfW_1)$ is generated by all elements $\st_3(f_1,f_2,f_3)$, where $f_1,f_2,f_3$ are Lie polynomials in the free generators.
\end{conjecture}

In the original terminology of Razmyslov \cite{Razmyslov_1973_AL}, this conjecture asserts that the standard identity $\st_3$ generates the ideal of weak identities of the pair $(\sfA_1,\sfW_1)$ as an ideal of associative--Lie pairs.

\section*{Acknowledgements}
This paper was written while Artem Lopatin was visiting the Sino-Russian Mathematics Center at Peking University,
Beijing, China; he thanks this institution for hospitality and excellent working conditions.

\printbibliography

\end{document}